\numberwithin{equation}{section}
\newtheorem{Thm}[equation]{Theorem}
\newtheorem*{Thm*}{Theorem}
\newtheorem{Prop}[equation]{Proposition}
\newtheorem{Lem}[equation]{Lemma}
\newtheorem{Cor}[equation]{Corollary}
\theoremstyle{remark}
\newtheorem{Def}[equation]{Definition}
\newtheorem{Not}[equation]{Notation}
\newtheorem*{Conv*}{Conventions}
\newtheorem{Rem}[equation]{Remark}
\newcommand{\nc}{\newcommand}
\nc{\dmo}{\DeclareMathOperator}
\dmo{\Ab}{Ab}
\dmo{\AbMon}{AbMon}
\dmo{\Abelem}{Abelem}
\dmo{\Aut}{Aut}
\dmo{\Bi}{bi}
\dmo{\Bisets}{Bisets}
\dmo{\DER}{\mathsf{DER}}
\dmo{\ADDER}{\mathsf{ADDER}}
\dmo{\coev}{coev}
\dmo{\Coloc}{Coloc}
\dmo{\ev}{ev}
\dmo{\Fib}{Fib}
\dmo{\Free}{Free}
\dmo{\Id}{Id}
\dmo{\Loc}{Loc}
\dmo{\rmI}{I}
\dmo{\rmL}{L}
\dmo{\rmR}{R}
\dmo{\Spc}{Spc}
\dmo{\Thick}{Thick}
\dmo{\chara}{char}%
\dmo{\coh}{coh} 
\dmo{\Coind}{CoInd}
\dmo{\coker}{coker}
\dmo{\cone}{cone}
\dmo{\Der}{D}
\dmo{\Ch}{Ch}
\nc{\Rder}{\mathrm{R}} 
\nc{\Lder}{\mathrm{L}} 
\dmo{\Khocat}{K}
\dmo{\End}{End}
\dmo{\Ext}{Ext}
\dmo{\rmH}{H}
\dmo{\Ho}{Ho}
\dmo{\Hom}{Hom}
\dmo{\id}{id}
\dmo{\Img}{Im}
\dmo{\incl}{incl}
\dmo{\Ind}{Ind}
\dmo{\CoInd}{CoInd}
\dmo{\Ker}{Ker}
\dmo{\Les}{Les}
\dmo{\Map}{Map}%
\dmo{\Mod}{Mod}
\dmo{\GrMod}{GrMod}
\dmo{\lax}{lax}
\dmo{\modname}{mod}%
\dmo{\grmod}{grmod}
\dmo{\Mor}{Mor}%
\dmo{\Obj}{Obj}
\dmo{\Or}{Or}
\dmo{\Oldorbit}{\mathcal{O}} 
\dmo{\Fix}{Fix} 
\dmo{\Ev}{Ev} 
\dmo{\pr}{pr}
\dmo{\canin}{in} 
\dmo{\Proj}{Proj} 
\dmo{\Inj}{Inj} 
\dmo{\proj}{proj}
\dmo{\Qcoh}{Qcoh}
\dmo{\rank}{rank}
\dmo{\Res}{Res}
\dmo{\Defl}{Def}
\dmo{\Infl}{Inf}
\dmo{\Iso}{Iso}
\dmo{\Rname}{R}
\dmo{\Sp}{Sp} 
\dmo{\SH}{SH}
\nc{\SHp}{\SH_{(p)}}
\dmo{\smallb}{b}
\dmo{\smallperf}{perf}
\dmo{\Spec}{Spec}
\dmo{\Spech}{Spec^h}
\dmo{\Stab}{Stab}
\dmo{\stab}{stab}
\dmo{\supp}{supp}
\dmo{\switch}{switch}
\dmo{\TTR}{TTR}
\dmo{\Spanname}{{\sf Span}}
\dmo{\map}{map}
\dmo{\Rel}{Rel}
\nc{\Ivo}[1]{{\color{OliveGreen}#1}}
\nc{\Paul}[1]{{\color{Blue}#1}}
\nc{\Pout}[1]{\Paul{\sout{#1}}}
\nc{\Iout}[1]{\Ivo{\sout{#1}}}
\nc{\Prule}{\Paul{\smallbreak\hrule\smallbreak}}
\nc{\SEcell}{\rotatebox[origin=c]{45}{$\Downarrow$}} 
\nc{\NEcell}{\rotatebox[origin=c]{135}{$\Downarrow$}} 
\nc{\SWcell}{\rotatebox[origin=c]{-45}{$\Downarrow$}} 
\nc{\NWcell}{\rotatebox[origin=c]{-135}{$\Downarrow$}} 
\nc{\Scell}{\rotatebox[origin=c]{0}{$\Downarrow$}} 
\nc{\Ncell}{\rotatebox[origin=c]{0}{$\Uparrow$}} 
\nc{\Ecell}{\Rightarrow}
\nc{\oEcell}[1]{\overset{\scriptstyle #1}{\Ecell}}
\nc{\isoEcell}{\overset{\sim}{\,\Ecell\,}}
\nc{\isocell}[1]{\undersett{ #1}\isoEcell}
\nc{\Isocell}[1]{\undersett{ #1}{\overset{\sim}{\Longrightarrow}}}
\nc{\Wcell}{\rotatebox[origin=c]{90}{$\Uparrow$}} 
\nc{\Span}{\Spanname}
\nc{\Spanhat}{\textrm{\sf S}\widehat{\textrm{\sf pan}}} 
\nc{\tSpan}{\pih{\Spanname}}
\nc{\IFF}{$\Leftrightarrow$}
\nc{\ass}{\mathrm{ass}} 
\nc{\lun}{\mathrm{lun}} 
\nc{\run}{\mathrm{run}} 
\nc{\fun}{\mathrm{fun}} 
\nc{\un}{\mathrm{un}} 
\nc{\Crich}{\underline{\cat{C}}}
\nc{\DbG}{\Db(\kk G\mmod)}
\nc{\uA}{\underline{A}}
\nc{\doublequot}[3]{#1\backslash #2/#3}
\nc{\HGK}{\doublequot HGK}
\nc{\quadtext}[1]{\quad\textrm{#1}\quad}
\nc{\qquadtext}[1]{\qquad\textrm{#1}\qquad}
\nc{\PZG}{\cat{C}_{\bbZ}(\bbZ G)}
\nc{\TTRK}{\TTR(\cat K)}
\nc{\psets}{\mathsf{-sets}_\sbull}
\nc{\Gsets}{G\mathsf{-sets}}
\nc{\Hsets}{H\mathsf{-sets}}
\nc{\AddK}{\Add^{\Sigma}(\cat K)}
\nc{\adj}{\dashv\,}
\nc{\adjto}{\rightleftarrows}
\nc{\AK}{A\MModcat{K}}
\nc{\BK}{B\MModcat{K}}
\nc{\bbA}{\mathbb{A}}
\nc{\bbB}{\mathbb{B}}
\nc{\bbC}{\mathbb{C}}
\nc{\bbD}{\mathbb{D}}
\nc{\bbF}{\mathbb{F}}
\nc{\bbI}{\mathbb{I}}
\nc{\bbM}{\mathbb{M}}
\nc{\bbN}{\mathbb{N}}
\nc{\bbP}{\mathbb{P}}
\nc{\bbQ}{\mathbb{Q}}
\nc{\bbR}{\mathbb{R}}
\nc{\bbZ}{\mathbb{Z}}
\nc{\bbZp}{\mathbb{Z}_{(p)}}
\nc{\Sphere}{\mathbb{S}} 
\nc{\cat}[1]{\mathcal{#1}}
\nc{\Displ}{\displaystyle}
\nc{\ie}{{\sl i.e.}\ }
\nc{\cf}{{\sl cf.}\ }
\nc{\into}{\mathop{\rightarrowtail}}
\nc{\inv}{^{-1}}
\nc{\isoto}{\buildrel \sim\over\to}
\nc{\isotoo}{\mathop{\buildrel \sim\over\too}}
\nc{\kk}{\Bbbk}
\nc{\onto}{\mathop{\twoheadrightarrow}}
\nc{\too}{\mathop{\longrightarrow}\limits}
\nc{\xytriangle}[7]{\xymatrix@C=#7em{#1\ar[r]^-{\Displ #4} & #2 \ar[r]^-{\Displ #5}&#3\ar[r]^-{\Displ #6}&T #1}}
\nc{\ababs}{{\sl ab absurdo}}
\nc{\adh}[1]{\overline{#1}}
\nc{\adhoc}{{\sl ad hoc}}
\nc{\adhpt}[1]{\adh{\{#1\}}}
\nc{\afortiori}{{\sl a fortiori}}
\nc{\aka}{{a.\,k.\,a.}\ }
\nc{\ala}{{\sl \`a la}\ }
\nc{\apriori}{{\sl a priori}}
\nc{\Autcat}[1]{\Aut_{\cat #1}}
\nc{\cO}{\mathcal{O}}
\nc{\calO}{\mathcal{O}}
\nc{\cV}{\mathcal{V}}
\nc{\Db}{\Der^{\smallb}}
\nc{\Dqc}{\Der_{\Qcoh}}
\nc{\Dperf}{\Der^{\smallperf}}
\nc{\eg}{{\sl e.g.}}
\nc{\eps}{\varepsilon}
\nc{\equalby}[1]{\overset{\textrm{#1}}{=}}
\nc{\FFree}{\,\text{--}\Free}%
\nc{\FFreecat}[1]{\FFree_{\cat #1}}
\nc{\FK}{\mathcal{F}(\cat K)}
\nc{\gm}{\mathfrak{m}}
\nc{\Homcat}[1]{\Hom_{\cat #1}}
\nc{\Morcat}[1]{\Mor_{\cat #1}}
\nc{\hook}{\hookrightarrow}
\newcommand{\hooklongrightarrow}{\lhook\joinrel\longrightarrow}
\nc{\Idcat}[1]{\Id_{\cat{#1}}}
\nc{\ideal}[1]{\langle #1\rangle}
\nc{\ihom}{{\mathsf{hom}}} 
\nc{\ihomcat}[1]{\ihom_{\cat #1}}
\nc{\Kcat}[1]{#1\MModcat{K}}
\nc{\KP}{\cat{K}_{\cat P}}
\nc{\loccit}{{\sl loc.\ cit.}}
\nc{\lind}{\rmL\!}
\nc{\RR}{\rmR\!}
\nc{\Lotimes}{\otimes^{\rmL}}
\nc{\Mid}{\,\bigm|\,}
\nc{\MMod}{\,\text{-}\Mod}%
\nc{\Exact}{\mathfrak K\,\text{-}\mathrm{Exa}^\mathbb Z/2_\infty} 
\nc{\MModcat}[1]{\MMod_{\cat #1}}%
\nc{\mmod}{\,\text{--}\modname}%
\nc{\mmodb}{\mmod^\sbull}%
\nc{\op}{{\mathrm{op}}}
\nc{\co}{{\mathrm{co}}}
\nc{\costar}{**}
\nc{\oto}[1]{\overset{#1}\to}
\nc{\ointo}[1]{\overset{#1}{\rightarrowtail}}
\nc{\loto}[1]{\overset{#1}{\leftarrow}}
\nc{\otoo}[1]{\overset{#1}{\,\longrightarrow\,}}
\nc{\lotoo}[1]{\overset{#1}{\,\longleftarrow\,}}
\nc{\ourfrac}[2]{\genfrac{}{}{0pt}{}{\Displ #1}{\scriptstyle #2}}
\nc{\ouriff}{\Leftrightarrow}
\nc{\oursetminus}{\!\smallsetminus\!}
\nc{\potimes}[1]{^{\otimes #1}}
\nc{\pproj}{\,\text{-}\proj}
\nc{\ptimes}[1]{^{\times #1}}
\nc{\dd}[1]{_{{\scriptscriptstyle(#1)}}}
\nc{\uu}[1]{^{{\scriptscriptstyle(#1)}}}
\nc{\pushout}{\textrm{\rm p.o.}}
\nc{\qp}{q_{_{\scriptstyle \cat P}}\!}%
\nc{\Rcat}[1]{\Rname_{\cat #1}^\sbull}
\nc{\rdto}{}
\nc{\restr}[1]{{|_{\scriptstyle #1}}}
\nc{\RK}{\Rcat{K}}
\nc{\sbull}{{\scriptscriptstyle\bullet}}
\nc{\SET}[2]{\bigl\{\,#1\Mid#2\,\bigr\}}
\nc{\SHA}{\SH{}^{\bbA^{1}}}
\nc{\SHfin}{\SH^{\text{\rm fin}}}
\nc{\smat}[1]{\left(\begin{smallmatrix} #1 \end{smallmatrix}\right)}
\nc{\SpcAK}{\Spc(A\MModcat{K})}
\nc{\SpcK}{\Spc(\cat K)}
\nc{\suppcat}[1]{\supp(\cat #1)}
\nc{\then}{\Rightarrow}
\nc{\tideal}[1]{\ideal{#1}}
\nc{\unit}{\mathbbm{1}}
\nc{\unitcat}[1]{\unit_{\cat #1}}
\nc{\vcorrect}[1]{{\vphantom{\vbox to #1em{}}}}
\nc{\onept}{\mathrm{B}} 
\nc{\undersett}[1]{\underset{\scriptstyle #1}}
\nc\noloc{\nobreak\mspace{6mu plus 1mu}{:}\nonscript\mkern-\thinmuskip\mathpunct{}\mspace{2mu}}
\nc{\HG}{\!{}^{^H}\overline{G}}
\nc{\uY}{\widetilde{Y}}
\nc{\ADD}{\mathsf{ADD}}
\nc{\MONADD}{\mathsf{MONADD}}
\nc{\SMONADD}{\mathsf{SMONADD}}
\nc{\Add}{\mathsf{Add}}
\nc{\SAD}{\mathsf{SAD}}
\nc{\Sad}{\mathsf{Sad}}
\nc{\Cat}{\mathsf{Cat}}
\nc{\CCat}{\textsf{-}\mathsf{Cat}}
\nc{\CAT}{\mathsf{CAT}}
\nc{\Dk}{\dual_{\kappa}}
\nc{\Dkk}{\dual_{\kappa'}}
\nc{\bs}{\backslash}
\nc{\biCpt}{\mathrm{biCpt}}
\nc{\biLCpt}{\mathrm{biLCpt}} 
\nc{\Groupoid}{\mathsf{Groupoid}}
\nc{\groupoid}{\mathsf{gpd}}
\nc{\gpd}{\groupoid}
\nc{\faithful}{\mathsf{faithful}}
\nc{\faith}{\mathsf{faithf}}
\nc{\exact}{\mathsf{ex}}
\nc{\smallfaithful}{\mathsf{f}}
\nc{\smallfused}{\mathsf{fus}}
\nc{\groupoidf}{\groupoid{}^{\smallfaithful}}
\nc{\groconn}{\groupoid_{\mathsf{conn}}}
\nc{\gps}{\mathsf{groups}} 
\nc{\group}{\mathsf{group}} 
\nc{\groupshort}{\mathsf{gr}}
\nc{\gpdG}{{\groupoidf_{\!\smallslash\!G}}} 
\nc{\GinG}{{\groupoidf_{G}}}
\nc{\gpdGfuz}{{\groupoid^{\smallfused}_{\!\smallslash\!G}}} 
\nc{\spanG}{{\widehat{\mathsf{gp}\,\,}\!\!\mathsf{d}}{}^\smallfaithful_{\!{}^{\scriptscriptstyle/}\!G}}
\nc{\biset}{\mathsf{biset}} 
\nc{\rfree}{\mathsf{rf}} 
\nc{\bifree}{\mathsf{bif}} 
\nc{\conj}{\mathsf{conj}} 
\nc{\smallslash}{{}^{\scriptscriptstyle/}}
\nc{\smallbs}{{}^{\scriptscriptstyle\backslash}}
\nc{\doublebs}{\smallbs\!\smallbs}
\nc{\Set}{\mathsf{Set}}
\nc{\set}{\mathsf{set}} 
\nc{\sset}{\textrm{-}\set}
\nc{\ssetfused}{\textrm{-}\underline{\set}} 
\nc{\ssetfuz}{\sset^{\smallfused}} 
\nc{\Top}{\mathrm{Top}} 
\nc{\Comp}{\mathsf{Top}^{\mathsf{comp}}}
\nc{\pih}[1]{\tau_{1}#1}
\nc{\all}{\mathsf{all}}
\dmo{\Fun}{\mathrm{Fun}} 
\dmo{\PsFun}{\mathsf{PsFun}} 
\dmo{\PsFunlax}{\mathsf{PsFun}_{\mathsf{lax}}}
\dmo{\PsFunoplax}{\mathsf{PsFun}_{\mathsf{oplax}}}
\dmo{\BCDex}{\mathsf{BCDex}_{\II\mathsf{-str}}}
\dmo{\BCDexdex}{\mathsf{BCDex}_{\II\mathsf{-dex}}}
\dmo{\biMack}{\mathsf{Mack}} 
\dmo{\Mackey}{Mack} 
\dmo{\twoFun}{2\mathsf{Fun}}
\nc{\Muniv}{\cat{M}^{\mathsf{univ}}}
\nc{\lG}{{}_{{\color{Gray}\scriptscriptstyle G}}}
\nc{\lH}{{}_{{\color{Gray}\scriptscriptstyle H}}}
\nc{\rG}{_{{\color{Gray}\!\scriptscriptstyle G}}}
\nc{\rH}{_{{\color{Gray}\!\scriptscriptstyle H}}}
\nc{\rK}{_{{\color{Gray}\!\scriptscriptstyle K}}}
\nc{\dual}{\Delta}
\nc{\ra}{\rightarrow}
\nc{\xra}{\xrightarrow}
\nc{\lto}{\leftarrow}
\nc{\olto}[1]{\overset{#1}\lto}
\nc{\C}{\mathbb{C}} 
\nc{\Cont}{\mathrm{C}} 
\nc{\Rep}{\mathrm{R}} 
\nc{\KK}{\mathrm{KK}} 
\nc{\Kth}{\mathrm{K}} 
\nc{\Cell}{\mathrm{Cell}}
\nc{\Modules}{\mathsf{Mod}}
\nc{\Alg}{\mathsf{Alg}}
\nc{\Sep}{\mathsf{Sep}}
\nc{\BurnG}{\cat{A}(G)}
\nc{\EndHere}{\bibliographystyle{alpha}\bibliography{articles}\printindex\end{document}}
\nc{\LimitOfCivilization}{\Paul{\goodbreak\hrule\smallbreak This is the limit of civilization.  Beyond this point, the right-hand 2-cells might be running upside-down.  $<$:-] \smallbreak\hrule\goodbreak}}
\nc{\cD}{\cat{D}}
\nc{\cG}{\cat{G}}
\nc{\cM}{\cat{M}}
\nc{\cN}{\cat{N}}
\nc{\DD}{\cat{D}}
\nc{\MM}{\cat{M}}
\nc{\NN}{\cat{N}}
\nc{\GG}{\mathbb{G}}
\nc{\II}{\mathbb{J}}
\nc{\JJ}{\II}
\nc{\AAA}{\mathbb{A}}
\nc{\leta}{{}^{\ell}\eta}
\nc{\reta}{{}^{r\!}\eta}
\nc{\leps}{{}^{\ell}\eps}
\nc{\reps}{{}^{r\!}\eps}
\nc{\gammap}[1]{\gamma^{(#1)}}
\nc{\what}[1]{\widehat{\cat{#1}}}
\nc{\und}[1]{{\kern1pt\underline{\kern-1pt{#1}\kern-1.5pt}\kern1.5pt}}
\nc{\Funplus}{\Fun_{+}}
\nc{\Mack}[1]{(Mack\,\ref{Mack-#1})}
\begin{document}


\title[The spectrum of equivariant KK-theory]{The spectrum of equivariant Kasparov theory for cyclic groups of prime order}
\author{Ivo Dell'Ambrogio}
\author{Ralf Meyer}
\date{\today}

\address{
\noindent   Univ.\ Lille, CNRS, UMR 8524 - Laboratoire Paul Painlev\'e, F-59000 Lille, France
}
\email{ivo.dell-ambrogio@univ-lille.fr}
\urladdr{http://math.univ-lille1.fr/$\sim$dellambr}
\address{
\noindent Mathematisches Institut, Universit\"at G\"ottingen, Bunsenstra{\ss}e 3--5, 37073 G\"ottingen, Germany
}
\email{ralf.meyer@mathematik.uni-goettingen.de}
\urladdr{http://www.uni-math.gwdg.de/rameyer/}

\begin{abstract}
  We compute the Balmer spectrum of the equivariant bootstrap
  category of separable $G$-C*-algebras when~$G$ is a
  group of prime order.
\end{abstract}

\subjclass[2010]{
19K35, 
19L47, 
18G80
}
\keywords{Equivariant Kasparov theory, triangulated categories, spectrum}

\thanks{First author partially supported by the Labex CEMPI (ANR-11-LABX-0007-01)}

\maketitle


\section{Introduction and results}
\label{sec:intro}%
\medskip

Let $G$ be a second countable locally compact group.  Kasparov's
$G$-equivariant KK-theory of complex separable $G$-C*-algebras
\cite{Kasparov88} defines a tensor triangulated category~$\KK^G$.
This fact was first recognized by Meyer and Nest~\cite{MeyerNest06},
who used it reformulate the Baum--Connes assembly map elegantly and
study its functorial properties.  The spectrum of an (essentially
small) tensor triangulated category~$\cat T$ is a certain
topological space $\Spc(\cat T)$.  This important invariant was
introduced by Balmer~\cite{Balmer05a}.  In \cite{DellAmbrogio10}, it
was observed that the Baum--Connes conjecture would follow from the
(also conjectural) surjectivity of the canonical map
$\bigsqcup_H\Spc(\KK^H)\to \Spc(\KK^G)$, where~$H$ runs through the
compact subgroups of~$G$.

Unfortunately, a geometric result such as the above surjectivity
still appears well out of reach; this is despite recent major
advances of tensor triangular geometry, the theory and computational
techniques concerned with the spectrum of tensor triangulated
categories (see the survey~\cite{BalmerICM}).  While numerous
spectra have been computed in fields ranging from topology to
modular representation theory, algebraic geometry and motivic theory
(see \cite{Balmer19} for a large catalogue), in noncommutative
geometry we are still striving to understand the most basic
examples.  The most general fact known to date is the existence,
when $G$ is a compact Lie group, of a canonical continuous
surjective map $\Spc \KK^G\to \Spec \Rep(G)$ onto the Zariski
spectrum of the complex character ring~$\Rep(G)$; and this holds for
rather formal reasons (see \cite[Cor.\,8.8]{Balmer10b}).  In order
to obtain sharper results, one must severely restrict the kind of
groups and algebras under study.

The present article contributes the first complete computation of a
truly equivariant example in this context, as well as techniques
which may prove useful in other cases.  Because of its good
generation properties, we consider as in~\cite{DellAmbrogio14} the
subcategory $\Cell(G)\subset \KK^G$ of \emph{$G$-cell algebras},
that is, the localizing subcategory of $\KK^G$ generated by the
function algebras $\Cont(G/H)$ for $H\leq G$; and we aim at
computing the spectrum of the tensor triangulated subcategory
$\Cell(G)^c$ of \emph{compact} objects (see \Cref{Rem:cpts}).  In
topology, this is analogous to considering the spectrum of the
equivariant stable homotopy category $\SH(G)^c$ of finite
$G$-spectra, as done in~\cite{BalmerSanders17}.  Moreover, we
restrict attention to finite groups~$G$.  In this case there is also
a continuous surjection
\[
  \rho_G\colon \Spc \Cell(G)^c \to \Spec \Rep(G),
\]
which is known to admit a canonical continuous splitting (see
\cite[Thm.\,1.4]{DellAmbrogio10}).  This map was conjectured by the
first author to be invertible.

The main result of the present paper settles some cases of the conjecture:

\begin{Thm}[See \Cref{sec:spc}]
  \label{Thm:main}
  Let $G= \mathbb Z/p\mathbb Z$ for a prime number~\(p\).  Then the
  canonical comparison map is a homeomorphism
  $\rho_G\colon \Spc \Cell(G)^c \overset{\sim}{\rightarrow} \Spec
  \Rep(G)$.
\end{Thm}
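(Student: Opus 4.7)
The plan is to establish injectivity of $\rho_G$: since $\rho_G$ is already continuous, surjective, and admits a continuous section by~\cite{DellAmbrogio10}, injectivity will upgrade it to a homeomorphism. Equivalently, by Balmer's classification of radical thick $\otimes$-ideals on Noetherian spaces, it suffices to show that such ideals of $\Cell(G)^c$ are in bijection with the Thomason subsets of $\Spec \Rep(G) = \Spec \mathbb{Z}[x]/(x^{p}-1)$. As a preliminary observation, $\Cell(G)^c$ is finitely generated as a tt-category by $\unit = \mathbb{C}$ and $\Cont(G) = \Ind_1^G \unit$, while $\Rep(G)$ is Noetherian of Krull dimension one; one thus expects $\Spc \Cell(G)^c$ to be Noetherian and the classification to reduce to a finite tree-like check.

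For $G = \mathbb{Z}/p$ the only proper subgroup is trivial, so one has a single restriction functor $\Res_1^G \colon \Cell(G)^c \to \KK^c$ to the non-equivariant bootstrap category. The target spectrum $\Spc \KK^c$ is known to be $\Spec \mathbb{Z}$, so functoriality produces a continuous map $\Spec \mathbb{Z} \to \Spc \Cell(G)^c$. This should account for every tt-prime $\mathfrak{p}$ with $\Cont(G) \notin \mathfrak{p}$, namely those lying on the $\Spec \mathbb{Z}$-branch of $\Spec \Rep(G)$ (where the augmentation $x \mapsto 1$ survives). Dually, the primes containing $\Cont(G)$ should be captured by a tt-functor out of the Verdier quotient $\Cell(G)^c / \langle \Cont(G) \rangle$, whose spectrum I would hope to identify with $\Spec \mathbb{Z}[\zeta_p]$, corresponding to the second branch of $\Spec \Rep(G)$.

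Concretely, for a prime $\mathfrak{p} \in \Spc \Cell(G)^c$, I would split into two cases. If $\Cont(G) \notin \mathfrak{p}$, then some form of Frobenius/Green--Julg reciprocity for the separable commutative monoid $\Cont(G)$ in $\KK^G$ should make $\Res_1^G$ conservative on the complement of $\langle \Cont(G) \rangle$, pinning $\mathfrak{p}$ down by its pullback in $\Spec \mathbb{Z}$. If $\Cont(G) \in \mathfrak{p}$, then $\mathfrak{p}$ descends to a prime of the quotient category $\Cell(G)^c / \langle \Cont(G) \rangle$, which I would try to identify with a suitable $\mathbb{Z}[\zeta_p]$-linear variant of the bootstrap. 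Matching these two families along their common closed point above the modular prime $p$ would yield the desired bijection with $\Spec \Rep(G)$.

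The main obstacle I anticipate is the identification of the Verdier quotient $\Cell(G)^c / \langle \Cont(G) \rangle$. Unlike the Balmer--Sanders computation of $\Spc \SH(G)^c$ in~\cite{BalmerSanders17}, here there is no obvious Euler-class element to invert, and the quotient must be analysed by more hands-on means: presumably via the separable-monoid structure on $\Cont(G)$ combined with Balmer's descent machinery, which should relate $\Cont(G)$-modules in $\Cell(G)$ to ordinary non-equivariant KK-theory (a form of Green--Julg), after which the quotient may be realised as a localisation by a Moore-type object. A secondary delicate point is the modular prime $p$, where the two branches of $\Spec \Rep(G)$ meet: one must carefully verify that $\Spc \Cell(G)^c$ has no spurious primes above $p$, probably via a direct $p$-local check on the generators $\unit$ and $\Cont(G)$.
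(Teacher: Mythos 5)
Your overall architecture matches the paper's: reduce to injectivity via the known continuous section, then split $\Spc\Cell(G)^c$ into the primes not containing $\Cont(G)$ (captured by $\Spc\Res^G_1$, via the separable-monoid/descent description of $\Res^G_1$ from \cite{BalmerDellAmbrogioSanders15} and \cite[Thm.\,1.5]{Balmer16}) and the primes containing $\Cont(G)$ (captured by the Verdier quotient). That first half of your plan is essentially the paper's \Cref{Lem:sepSpc} together with the splitting $\Res^G_1\circ\Infl^G_1=\Id$, and it goes through.

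The genuine gap is in the second half: you leave the identification of $\Cell(G)^c/\langle\Cont(G)\rangle$ as a hope, and the tool you propose for it (the separable-monoid structure on $\Cont(G)$ plus descent) is the tool for the \emph{other} branch; it says nothing about the quotient. The actual key input is K\"ohler's universal coefficient theorem for $\mathbb Z/p$ (together with Meyer's refinement), which is used to compute the graded endomorphism ring of the unit in the quotient $\cat Q(G)$ as $\mathbb Z[\vartheta,p^{-1},\beta^{\pm1}]$ (\Cref{Prop:Kth-comput}). Note in particular that $p$ becomes \emph{invertible} in the quotient, so its spectrum is $\Spec\mathbb Z[\vartheta,p^{-1}]$, not $\Spec\mathbb Z[\zeta_p]$ as you guess; this is precisely what makes the two branches of $\Spec\Rep(G)$ disjoint and dissolves your worry about ``matching along the common closed point above $p$'' --- there is no overlap to match, and no separate $p$-local check is needed. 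Moreover, even after computing this ring, one does not identify the quotient category with a $\mathbb Z[\zeta_p]$-linear bootstrap and read off its spectrum directly; rather, one observes that $\cat Q(G)^c$ is generated by its tensor unit and that the ring $\mathbb Z[\vartheta,p^{-1},\beta^{\pm1}]$ is Noetherian and regular, so that the affine-weakly-regular criterion of \cite[Thm.\,1.1]{DellAmbrogioStanley16} forces the graded comparison map for $\cat Q(G)^c$ to be a bijection. Without the UCT computation and this regularity argument, the second case of your dichotomy does not close.
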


The representation ring $\Rep(G)$ of \(\mathbb Z/p\) is isomorphic
to the group ring:
\[
  \Rep(G) \cong \mathbb Z[G]\cong \mathbb Z[x]/(x^p-1).
\]
A \emph{thick tensor-ideal subcategory} of $\Cell(G)^c$ is a full
subcategory~$\cat C$ that is closed under taking isomorphic --~that
is, equivariantly KK-equivalent~-- objects, mapping cones,
suspensions, retracts and tensor products with arbitrary objects.  A
subset of a topological space is called \emph{specialization closed}
if it is a union $S= \bigcup_i Z_i$ of closed subsets~$Z_i$.

Since $\Cell(G)^c$ is rigid and idempotent complete by
\Cref{Rem:cpts}, \Cref{Thm:main} may be combined with general
abstract results (namely, \cite[Thm.\,14, Rmks.\,12
and~23]{BalmerICM}) to classify the thick tensor-ideal subcategories
of $\Cell(G)^c$:

\begin{Cor}
  \label{Cor:thick-class}
  If $G\cong \mathbb Z/ p\mathbb Z$, then there is a canonical
  bijection between
  \begin{enumerate}[\rm(1)]
  \item thick tensor-ideal subcategories~$\cat C$ of $\Cell(G)^c$ and
  \item specialization closed subsets~$S$ of the Zariski spectrum
    $\Spec (\mathbb Z[x]/(x^p-1))$.
  \end{enumerate}
\end{Cor}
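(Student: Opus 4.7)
The plan is to deduce the Corollary as a formal consequence of \Cref{Thm:main} together with Balmer's abstract classification of thick tensor-ideals, whose input hypotheses are already flagged in the statement. Concretely, the theorem cited as \cite[Thm.\,14]{BalmerICM} asserts that for any essentially small, rigid, idempotent-complete tensor triangulated category~$\cat T$ there is a canonical order-preserving bijection between the thick tensor-ideal subcategories of~$\cat T$ and the specialization-closed subsets of the topological space $\Spc(\cat T)$. The remarks \cite[Rmks.\,12 and 23]{BalmerICM} spell out the mutually inverse assignments and confirm that the hypotheses of rigidity and idempotent completeness suffice; these are exactly what \Cref{Rem:cpts} records for $\Cell(G)^c$.

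First, I would check that the two hypotheses hold in our case. Rigidity and idempotent completeness of $\Cell(G)^c$ are asserted by \Cref{Rem:cpts}, so Balmer's theorem applies directly to $\cat T = \Cell(G)^c$ and yields a canonical bijection between thick tensor-ideals of $\Cell(G)^c$ and specialization-closed subsets of $\Spc(\Cell(G)^c)$. The bijection is the standard one: a thick tensor-ideal $\cat C$ is sent to its support $\supp(\cat C) = \bigcup_{X \in \cat C} \supp(X)$, and a specialization-closed subset $S$ is sent to the full subcategory of objects $X$ with $\supp(X) \subseteq S$.

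Second, I would transport this bijection along the homeomorphism supplied by \Cref{Thm:main}. Since $\rho_G\colon \Spc \Cell(G)^c \to \Spec \Rep(G)$ is a homeomorphism, specialization-closed subsets on either side correspond bijectively; combined with the isomorphism $\Rep(G) \cong \mathbb Z[x]/(x^p-1)$ for $G \cong \mathbb Z/p$ noted in the introduction, this immediately produces the claimed bijection between thick tensor-ideals of $\Cell(G)^c$ and specialization-closed subsets of $\Spec(\mathbb Z[x]/(x^p-1))$.

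The work in this Corollary is essentially bookkeeping, since the real content sits in \Cref{Thm:main}. The only point that requires a line of verification is that the two hypotheses for Balmer's classification are indeed met; everything else is a formal application plus the identification of the spectrum. No genuine obstacle arises at this stage.
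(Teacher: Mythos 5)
Your proposal is correct and matches the paper's own (implicit) argument exactly: the paper deduces the corollary from \Cref{Thm:main} combined with Balmer's classification theorem, citing rigidity and idempotent completeness of $\Cell(G)^c$ from \Cref{Rem:cpts}, and describes the same mutually inverse assignments via supports. Nothing is missing.
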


The classification assigns to a thick tensor-ideal $\cat C$ the union of the \emph{supports} $\supp(A)$ of all algebras $A$ in~$\cat C$, and to a specialization closed subset $S$ the subcategory of all $A$ with $\supp(A)\subseteq S$.
Morally, the support of an object is the subset of $\Spec (\mathbb
Z[x]/(x^p-1))$ on which it `lives' (see \Cref{sec:ttg} for the
abstract definition and \cite[\S6]{DellAmbrogio10} for a more
concrete description).  
As a consequence of the corollary, a compact $G$-cell algebra $A$ can be built from another one $B$ using the tensor-triangular operations if and only if $\supp (A) \subseteq \supp (B)$.

If $G=1$ is the trivial group, then $\Cell(1)\subset \KK$ is just the
usual bootstrap category and $\Cell(1)^c$ consists of the
C*-algebras in the bootstrap category with finitely generated
K-theory groups.  In this case, our results are already known (see
\cite[Thm.\,1.2]{DellAmbrogio10} and also~\cite{DellAmbrogio11}).

\Cref{Thm:main} is significantly harder to prove than the case of
the trivial group and requires new ingredients.  Our proof strategy
is roughly inspired by that of~\cite{BalmerSanders17}, which
computes the spectrum of the above-mentioned equivariant stable
homotopy category~$\SH(G)^c$.  Specifically, we divide the problem
in two halves (see the beginning of \Cref{sec:spc}) and settle the
first half thanks to the separable monadicity of restriction
functors proved in \cite{BalmerDellAmbrogioSanders15}.  For the
second half, however, our proofs diverge because of fundamental
structural differences between equivariant stable homotopy and
KK-theory (see Remarks~\ref{Rem:cf-SH1} and \ref{Rem:cf-SH2}).  The
point is to compute the spectrum of a certain localization
$\cat Q(G)$ of~$\Cell(G)$ (see \Cref{sec:Q(G)}).  In order to do
this, we show that the tensor triangulated category $\cat Q(G)$ is
`weakly regular', so that we may apply the results
of~\cite{DellAmbrogioStanley16}.  (Incidentally, this method
\emph{cannot} be applied to the equivariant stable homotopy category
by \Cref{Rem:cf-SH2}.)  The weak regularity of $\cat Q(G)$ is shown
using our last crucial ingredient, K\"ohler's universal coefficient
theorem~\cite{Koehler10}.

This strategy may well provide a proof of the bijectivity of
$\rho_G$ for more general finite groups.  K\"ohler's result,
however, only holds for groups of prime order and hence would need
to be replaced by a more general argument.

\section{The equivariant bootstrap category}
\label{sec:prelim}%
\medskip

We collect here some structural results on equivariant KK-theory and cell algebras.
Our sources are \cite{MeyerNest06} and~\cite{DellAmbrogio14}.

Let $G$ be a finite group (for simplicity, as many of the statements
hold much more generally).  First of all, recall from
\cite{MeyerNest06} that the \emph{Kasparov category} of separable
$G$-C*-algebras, $\KK^G$, is a \emph{tensor triangulated category},
that is, it is triangulated in the sense of Verdier and carries a
symmetric monoidal structure.  The exact tensor functor is the
minimal tensor product of C*-algebras, equipped with the diagonal
group action.  The tensor unit object $\unit$ is the
algebra~\(\mathbb C\) of complex numbers with the trivial
$G$-action.  The category $\KK^G$ is essentially small and has all
countable coproducts, provided by \(\Cont_0\)-direct sums.  The
suspension functor $\Sigma={\Cont_0(\mathbb R)\otimes -}$ of the
triangulated structure of $\KK^G$ is 2-periodic, that is, there is a
natural isomorphism $\Sigma^2 \cong \mathrm{id}_{\KK^G}$, thanks to
the \emph{Bott isomorphism}
$\beta\colon \mathbb C\overset{\sim}{\to} \Cont_0(\mathbb R^2)$.
Let $\Rep(G)$ be the representation ring of~$G$.  The graded
endomorphism ring of the unit is
\[
\End_{\KK^G}(\unit)_* \cong \Rep(G) [\beta^{\pm 1}].
\]

\begin{Def}[{\cite{DellAmbrogio14}}]
  \label{Def:Cell}
  Let
  \[ \Cell(G):= \Loc( \{ \Cont(G/H) \mid H\leq G \})
  \]
  be the \emph{localizing subcategory} of $\KK^G$ generated by the
  $G$-C*-algebras $\Cont(G/H)$ of complex functions for all
  subgroups $H\leq G$, with $G$-action induced by that of $G$ on its
  cosets~$G/H$.  That is, $\Cell(G)$ is the smallest
  triangulated subcategory of $\KK^G$ containing all $\Cont(G/H)$
  and closed under forming arbitrary (countable) coproducts.  The
  algebras in $\Cell(G)$ are called \emph{$G$-cell algebras}.
\end{Def}

\begin{Rem}[{\cite[\S3.1]{DEM14}}]
  \label{Rem:bootG}
  The \emph{$G$-equivariant bootstrap category~$\mathfrak B^G$} is
  defined similarly as the localizing subcategory of $\KK^G$
  generated by certain objects.  It is shown in
  \cite[Sec.\,3.1]{DEM14} that~$\mathfrak B^G$ consists precisely of
  those separable $G$-C*-algebras that are equivariantly
  KK-equivalent to a $G$-action on a C*-algebra of Type~I.  If~$G$
  is a finite cyclic group, then $\Cell(G)= \mathfrak B^G$.  For
  more general finite groups, there is only an inclusion
  $\Cell(G)\subseteq \mathfrak B^G$.
\end{Rem}

\begin{Rem} \label{Rem:cpts} By \cite[Prop.\,2.9]{DellAmbrogio14},
  $\Cell(G)$ is a \emph{rigidly-compactly generated} tensor
  triangulated category, in a countable sense of the term (as here
  there are only countable coproducts to work with).  This entails,
  in particular, that its \emph{compact} objects (those $G$-cell
  algebras $A$ such that $\KK^G(A,-)$ commutes with arbitrary
  countable direct sums) coincide with its \emph{rigid} objects
  (those $A$ admitting an inverse $A^\vee$ with respect to the
  tensor product).  It follows that $\Cell(G)^c$, the full
  subcategory of rigid-compact objects, is itself a tensor
  triangulated category.  In addition, $\Cell(G)^c$ is essentially
  small, \emph{rigid} (it admits the self-duality
  $A\mapsto A^\vee$), \emph{idempotent complete} (every idempotent
  endomorphism splits), and it is generated as a thick subcategory
  by the same set of objects:
  \[
    \Cell(G)^c=\Thick (\{\Cont(G/H)\mid H\leq G\}).
  \]
  Recall that a subcategory is \emph{thick} if it is triangulated
  and closed under retracts.
\end{Rem}

\begin{Rem} \label{Rem:funCell}
For finite groups, the usual functors between the equivariant Kasparov categories, such as restriction~$\Res^G_H$, induction~$\Ind^G_H$, and inflation~$\Infl^G_1$, preserve cell algebras and also compact cell algebras.  See \cite[\S2]{DellAmbrogio14}.
\end{Rem}

\section{K\"ohler's universal coefficient theorem}
\label{sec:UCT}%
\medskip

From now on, we restrict attention to $G=\mathbb Z/p\mathbb Z$ for a
prime number~$p$.  For these groups we may use the universal
coefficient theorem (``UCT'') due to K\"ohler \cite{Koehler10} in
order to compute $G$-equivariant KK-theory groups by algebraic
means, at least in principle.  We recall how this works and refer to
\cite{Koehler10} as well as \cite[\S4--5]{Meyer19pp} for all proofs.

Let $\Sigma \Cont(G) \to D \to \unit \to \Cont(G)$ be the mapping
cone sequence (distinguished triangle in $\Cell(G)$) for the unit
map $\unit \to \Cont(G)$, that is, the embedding of~$\mathbb C$ as
constant functions on~$G$.  Let
$B:= \mathbb C \oplus \Cont(G)\oplus D$ be the sum of the three
vertices of the triangle.  Define the $\mathbb Z/2$-graded ring
\[
\mathfrak K := \bigl(\End_{\Cell(G)}(B)_*\bigr)^\op.
\]
Given any $G$-C*-algebra $A\in \KK^G$, its \emph{K\"ohler invariant}
$F_*(A)$ is defined as
\begin{equation}  \label{eq:F_*(A)}
F_*(A)
:= \KK^G_*(B,A)
= \underbrace{ \KK^G_*(\unit, A)}_{
   		\begin{matrix}	F_*(A)_0 \end{matrix} }
\oplus \underbrace{ \KK^G_*(\Cont(G), A)}_{
		\begin{matrix} F_*(A)_1 \end{matrix} }
\oplus  \underbrace{ \KK^G_*(D,A), }_{
 		\begin{matrix} F_*(A)_2  \end{matrix} }
\end{equation}
considered with its evident structure of $\mathbb Z/2$-graded countable left $\mathfrak K$-module.
The assignment $A\mapsto F_*(A)$ extends to a functor $F_*=\KK^G_*(B,-)$ from $\KK^G$ to $\mathbb Z/2$-graded countable left $\mathfrak K$-modules.

K\"ohler's UCT says that there is a natural short exact sequence
\begin{equation} \label{eq:UCT}
\xymatrix@C=13pt{
0\ar[r] & \Ext^1_{\mathfrak K} (F_{*+1}A, F_*A') \ar[r] & \KK^G_*(A,A') \ar[r]^-{F_*}& \Hom_\mathfrak K(F_*A, F_*A') \ar[r]& 0
}
\end{equation}
of $\mathbb Z/2$-graded abelian groups for all $A\in \Cell(G)$ and
$A'\in \KK^G$.  Here $\Hom_\mathfrak K(-,-)$ denotes the graded Hom,
which in degree zero is the group of degree-preserving
$\mathfrak K$-linear maps and in degree one the group of
degree-reversing maps; and similarly for the extension
group~$\Ext^1_\mathfrak K$.  The map denoted by~\(F_*\) is part of the
functor~$F_*$.

\begin{Rem}
  \label{Rem:iso_classif}
  A standard consequence of the UCT is that every isomorphism
  between the K\"ohler invariants $F_*(A)$ and $F_*(B)$ of two
  $G$-cell algebras $A$ and~$B$ lifts to an isomorphism $A\cong B$
  in $\Cell(G)$.  K\"ohler also characterizes the essential image of
  the functor~$F_*$ among the $\mathfrak K$-modules.  Namely, it
  consists precisely of those countable $\mathbb Z/2$-graded
  $\mathfrak K$-modules which are \emph{exact} in the sense that two
  sequences built out of~$M$ and the internal structure
  of~$\mathfrak K$ are exact.  For~$M$ of the form~$F_*(A)$, the two
  sequences are those arising by applying $\KK^G(-,A)$ to the
  triangle linking $\unit, \Cont(G), D$ and its dual triangle under
  Baaj--Skandalis duality.  It follows that $A \mapsto F_*(A)$
  induces a bijection between isomorphism classes of $G$-cell
  algebras and isomorphism classes of exact
  countable $\mathbb Z/2$-graded left $\mathfrak K$-modules.
\end{Rem}

\begin{Rem}
We have displayed in \eqref{eq:F_*(A)} the canonical decomposition of $F_*(A)$ into three parts.
Note that any abstract $\mathfrak K$-module similarly decomposes as
\[
  M = M_0 \oplus M_1 \oplus M_2,
\]
where the three direct summands are the images of the idempotent
elements of~$\mathfrak K$ corresponding to the units of the
endomorphism rings $\End(\unit)$, $\End(\Cont(G))$ and $\End(D)$,
respectively.  The three parts are precisely the terms appearing in
each of the two sequences defining the exactness of $M$ as in
\Cref{Rem:iso_classif}.
\end{Rem}

\begin{Rem}
  \label{Rem:invert_p}
  Both endomorphism rings $\End(\unit)= \Rep(G)$ and
  $\End(\Cont(G))$ are canonically isomorphic to
  $\mathbb Z[x]/(x^p-1)$.  So $M_0$ and $M_1$ are
  $\mathbb Z[x]/(x^p-1)$-modules for any $\mathfrak K$-module~$M$.
  In view of later sections, let us detail what happens if we
  invert~$p$.  The polynomial $x^p-1$ has two irreducible factors,
  $x-1$ and $\Phi_p(x):= 1+x + \cdots +x^{p-1}$.  After
  inverting~$p$, they give rise to the product decomposition
\[
\mathbb Z[x, p^{-1}]/(x^p-1)
\cong \underbrace{\mathbb Z[x, p^{-1}]/(x-1)}_{{} \cong \mathbb Z[p^{-1}]} \times \underbrace{\mathbb Z[x,p^{-1}]/(\Phi_p)}_{{}\cong \mathbb Z[\vartheta, p^{-1}]}
\]
(see \Cref{Rem:Spec-decomp} below for more on this).  Here we write
$\vartheta$ for the `abstract' primitive $p$-th root of unity, that
is, the image of~$x$ in the right-hand side quotient.  Now suppose
that a $\mathfrak K$-module~$M$ is \emph{uniquely $p$-divisible},
that is, it is acted on invertibly by~$p$.  In other words, suppose
it is a $\mathfrak K[p^{-1}]$-module.  Then each of the two parts
$M_0$ and $M_1$ decomposes into a sum of a
$\mathbb Z[p^{-1}]$-module and a
$\mathbb Z[\vartheta, p^{-1}]$-module, via the above actions and
decomposition.
\end{Rem}

\section{A simplification of the bootstrap category}
\label{sec:Q(G)}%
\medskip

Let $G=\mathbb Z/p\mathbb Z$.
We will use the UCT for $G$-cell algebras recalled in~\Cref{sec:UCT}, together with closely related results from~\cite{Meyer19pp}, in order to study the following localization of the $G$-equivariant bootstrap category.

Recall that the \emph{Verdier quotient} of a triangulated category $\mathcal T$ by a thick (\eg, localizing) subcategory $\mathcal S$ is the universal exact functor $\cat T\to \cat T/\cat S$ to a triangulated category $\cat T/\cat S$ in which all objects of $\cat S$ become zero. 
It is obtained as the localization of $\cat T$ which inverts the morphisms whose cone belongs to~$\cat S$; see \cite[\S2]{Neeman01}.

\begin{Not}
  \label{Not:Q}
  Let $\Loc(\Cont(G)) \subset \Cell(G)$ be the localizing
  subcategory generated by the $G$-C*-algebra $\Cont(G)$.  Let
  \[
    Q_G\colon \Cell(G)\longrightarrow \Cell(G)/\Loc(\Cont(G)) =: \cat Q(G)
  \]
  be the Verdier quotient of the bootstrap category $\Cell(G)$ by
  the localizing subcategory generated by the $G$-C*-algebra
  $\Cont(G)$.
\end{Not}

The quotient $\cat Q(G)$  is related to $\Cell(G)$ by a nice
localization sequence of tensor triangulated categories:
\[
\xymatrix{
\Loc(\Cont(G)) \ar[rr]^-{\incl}_{\perp} &&
 \Cell(G) \ar[rr]^-{Q_G}_{\perp} \ar@/^3ex/[ll]^-S &&
  \cat Q(G) \ar@/^3ex/[ll]^-R
}
\]
The following omnibus lemma makes this more precise.

\begin{Lem}
  \label{Lem:Q(G)}
  The quotient functor $Q_G$ admits a coproduct-preserving fully
  faithful right adjoint~$R$.  The inclusion of its full kernel
  $\Ker(Q_G)= \Loc(\Cont(G))$ also admits a coproduct-preserving
  right adjoint~$S$.
  \begin{enumerate}[\rm(1)]
  \item The full essential image of~$R$, which we denote by
    $\cat N:= \Img(R)= \Img(R\circ Q_G)$, is equal to the right
    orthogonal of \(\Loc(\Cont(G))\):
    \[
      \cat N
      = \Loc(\Cont(G))^\perp
      = \Cont(G)^\perp
      := \{ A \in \Cell(G) \mid \KK^G_*(\Cont(G), A) = 0\}.
    \]
  \item The kernel $\Ker(Q_G)= \Loc(\Cont(G))$ equals the left
    orthogonal of~$\cat N$:
    \[
      \Loc(\Cont(G)) = {}^\perp \mathcal N = \{ A \in \Cell(G) \mid \KK^G_* (A,B) =0 \textrm{ for all } B\in \cat N \}.
    \]
  \item There is an essentially unique distinguished triangle in
    $\Cell(G)$ of the form
    \begin{equation} \label{eq:localization_triangle}
      \xymatrix{
        \Sigma N \ar[r] & P \ar[r] & \unit \ar[r] & N,
      }
    \end{equation}
    where $P:= \incl \circ S (\unit)\in \Loc(\Cont(G))$ and
    $N:= R \circ Q_G(\unit)\in \Loc(\Cont(G))^\perp$.
  \item There are isomorphisms of endofunctors of~$\Cell(G)$
    \[
      \incl \circ S \cong P \otimes -
      \quad \textrm{ and } \quad
      R \circ Q_G \cong N\otimes -.
    \]
  \end{enumerate}
\end{Lem}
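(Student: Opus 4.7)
The plan is to deduce everything from standard Bousfield-localization machinery in a compactly generated tensor triangulated category, the key inputs being that $\Cont(G)$ is compact in $\Cell(G)$ (\Cref{Rem:cpts}) and that $\Loc(\Cont(G))$ is a $\otimes$-ideal. The first input, via Neeman's Brown representability, immediately yields the right adjoints $S$ and $R$, with $R$ fully faithful (since $Q_G$ is a Verdier quotient) and both preserving coproducts (since $\Loc(\Cont(G))$ is compactly generated within $\Cell(G)$, the localization is smashing in Neeman's sense).

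Parts (1) and (2) are then fairly routine. For (1), the identification $\Img(R) = \Ker(Q_G)^\perp$ is automatic for a fully faithful right adjoint of a Verdier quotient, while $\Loc(\Cont(G))^\perp = \Cont(G)^\perp$ follows because for fixed $A$ the class $\{X : \KK^G_*(X,A) = 0\}$ is a localizing subcategory. For (2), one inclusion is formal; conversely, if $A \in {}^\perp \cat N$ then the unit $\eta_A : A \to RQ_G(A)$ vanishes (its target is in $\cat N$), yet the triangle identity combined with the invertibility of the counit of $Q_G \dashv R$ (from $R$ being fully faithful) forces $Q_G(\eta_A)$ to be an isomorphism, giving $Q_G(A) = 0$. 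Part (3) is then the standard localization triangle at $\unit$, formally produced from the counit of $\incl \dashv S$ and the unit of $Q_G \dashv R$ and rotated using the $2$-periodicity of $\Sigma$ in $\KK^G$.

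The real work lies in (4), which amounts to checking that tensoring the triangle of (3) with an arbitrary $A \in \Cell(G)$ yields the localization triangle of $A$. That the first vertex lies in $\Loc(\Cont(G))$ requires $\Loc(\Cont(G))$ to be a $\otimes$-ideal; I would establish this via the projection formula $\Cont(G) \otimes A \cong \Ind_1^G \Res_1^G A$, observing that $\Res_1^G A$ lies in $\Cell(1)$ by \Cref{Rem:funCell} and that $\Ind_1^G$ sends $\unit$ to $\Cont(G)$ while preserving coproducts and triangles. That the third vertex lies in $\cat N$ amounts to
\[
\KK^G_*(\Cont(G), N \otimes A) \cong \KK_*(\unit, \Res_1^G N \otimes \Res_1^G A) = 0,
\]
which uses the $\Ind$-$\Res$ adjunction together with the monoidality of restriction; here $\Res_1^G N$ has vanishing K-theory (since $\KK_*(\unit, \Res_1^G N) \cong \KK^G_*(\Cont(G), N) = 0$ by definition of $\cat N$), and a cell algebra of the trivial group with vanishing K-theory is zero in $\KK$ (because $\Cell(1) = \Loc(\unit)$ is compactly generated by $\unit$). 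Uniqueness of the localization triangle then gives the natural isomorphisms of (4).
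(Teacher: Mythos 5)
Your proposal is correct and follows essentially the same route as the paper, which simply verifies that $\Loc(\Cont(G))$ is a tensor ideal and then cites the general smashing-localization machinery of \cite[Thm.\,2.28, Prop.\,2.26]{DellAmbrogio10} (in its countable, $\alpha=\aleph_1$ form) for everything you prove by hand. The only substantive variation is that you establish the $\otimes$-ideal property and the membership $N\otimes A\in\cat N$ via the projection formula $\Cont(G)\otimes A\cong \Ind_1^G\Res_1^G A$ and the vanishing of $\Res_1^G N$, whereas the paper uses the explicit isomorphisms $\Cont(G)\otimes\Cont(G/H)\cong\bigoplus_{G/H}\Cont(G)$ together with \cite[Lem.\,2.5]{DellAmbrogio14}; both are standard and equivalent in substance.
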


\begin{proof}
This all follows from standard results on rigidly-compactly generated categories, modulo the fact that we are in the \emph{countably generated} setting.  All claims can be deduced from \cite[\S2]{DellAmbrogio10} for the case $\alpha = \aleph_1$.

More precisely, we begin by noticing that $\Thick(\Cont(G))$ is a tensor ideal in $\Cell(G)^c$ and $\Loc(\Cont(G))$ is a tensor ideal in~$\Cell(G)$.
Both follow from the isomorphisms $\Cont(G)\otimes \Cont(G/H)\cong \bigoplus_{G/H} \Cont(G)$ for all $H\leq G$ and \cite[Lem.\,2.5]{DellAmbrogio14}.

Then we apply \cite[Thm.\,2.28]{DellAmbrogio10} to the rigidly-compactly generated category $\cat T:=\Cell(G)$ (see \Cref{Rem:cpts}) and its tensor ideal of compact objects $\cat J:=\Thick(\Cont(G))$ to conclude that $\cat L:= \Loc(\Cont(G))$ and $\cat L^\perp=\cat N$ form a pair of localizing tensor ideals of~$\Cell(G)$ which are \emph{complementary} as in \cite[Def.\,2.7]{DellAmbrogio10}.
All the remaining claims then follow from \cite[Prop.\,2.26]{DellAmbrogio10}.
\end{proof}

By construction, $\cat Q(G)$ enjoys the same structural properties as~$\Cell(G)$ but with an extra simplification: it is generated by its tensor unit.

\begin{Cor}
  \label{Cor:Q(G)}
  The category $\cat Q(G)$ is a tensor triangulated category that is
  rigidly-compactly generated \textup{(}in the countable
  sense\textup{)}.  It is generated by its tensor unit, that is,
  $\cat Q(G)=\Loc(\unit)$, and also $\cat Q(G)^c=\Thick(\unit)$.
  The quotient functor~$Q_G$ is an exact tensor functor and
  preserves coproducts.  Moreover, $Q_G\colon \Cell(G)\to \cat Q(G)$
  preserves compact objects; the image
  $Q_G(\Cell(G)^c)\subseteq \cat Q(G)^c$ identifies with the Verdier
  quotient $\Cell(G)^c/\Thick(\Cont(G))$ and embeds fully faithfully
  in $\cat Q(G)^c$.  Its image is \emph{dense}, that is, any object
  of $\cat Q(G)^c$ is a retract of one in \(Q_G(\Cell(G)^c)\).
\end{Cor}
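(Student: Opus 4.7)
The plan is to derive every assertion from the adjunctions of \Cref{Lem:Q(G)} together with standard machinery for rigidly-compactly generated tensor triangulated categories, working in the countably generated setting (i.e., $\alpha = \aleph_1$) as in \cite[\S2]{DellAmbrogio10}.

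First, recall from the proof of \Cref{Lem:Q(G)} that $\Loc(\Cont(G))$ is a localizing \emph{tensor} ideal of $\Cell(G)$. The Verdier quotient of a tensor triangulated category by such an ideal automatically inherits a tensor triangulated structure making the quotient functor $Q_G$ an exact tensor functor. As a left adjoint to~$R$ (\Cref{Lem:Q(G)}), $Q_G$ preserves arbitrary countable coproducts. Moreover, since~$R$ also preserves coproducts (\Cref{Lem:Q(G)}), the standard adjunction argument shows that $Q_G$ sends compact objects to compact objects: for $A \in \Cell(G)^c$ and any countable family $\{X_i\}$ in $\cat Q(G)$,
\[
  \Hom_{\cat Q(G)}\bigl(Q_G A,\,{\textstyle\coprod_i X_i}\bigr)
  \cong \Hom_{\Cell(G)}\bigl(A,\,{\textstyle\coprod_i R X_i}\bigr)
  \cong {\textstyle\coprod_i}\Hom_{\cat Q(G)}(Q_G A, X_i).
\]

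Next, I would identify a compact generator of $\cat Q(G)$. By \Cref{Def:Cell}, $\Cell(G) = \Loc(\{\Cont(G/H) \mid H \leq G\})$, and for $G = \mathbb Z/p$ the only subgroups are~$1$ and~$G$, so the generating set reduces to $\{\Cont(G), \unit\}$. Since $Q_G$ preserves coproducts, triangles and retracts, and kills $\Cont(G)$ by construction, its image generates $\cat Q(G)$ as a localizing subcategory; thus $\cat Q(G) = \Loc(\unit)$. As $\unit \in \Cell(G)^c$ by \Cref{Rem:cpts} and $Q_G$ preserves compactness, $\unit$ is a compact generator of $\cat Q(G)$, and therefore $\cat Q(G)^c = \Thick(\unit)$ by the standard compact-generation argument. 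Rigidity is then immediate: $\unit$ is its own tensor-dual, and the class of rigid objects is stable under shifts, cones, retracts and tensor products, so every object of $\Thick(\unit)$ is rigid. This proves that $\cat Q(G)$ is rigidly-compactly generated in the countable sense.

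Finally, the fully faithful embedding and density assertion is the countably generated form of the Thomason--Neeman density theorem for Verdier quotients. Since the localizing subcategory $\Loc(\Cont(G))$ that we are quotienting by is generated by the single compact object $\Cont(G) \in \Cell(G)^c$, the universal property induces a canonical functor
\[
  \Cell(G)^c / \Thick(\Cont(G)) \hookrightarrow \cat Q(G)^c
\]
which is fully faithful with essential image dense, i.e., every object of $\cat Q(G)^c$ is a retract of one of the form $Q_G(A)$ with $A \in \Cell(G)^c$. For the precise statement and proof in our countable setting, we invoke \cite[Thm.\,2.28 and Prop.\,2.26]{DellAmbrogio10} applied to the rigidly-compactly generated category $\Cell(G)$ and its compact tensor ideal $\Thick(\Cont(G))$, exactly as in the proof of \Cref{Lem:Q(G)}. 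The argument contains no genuinely difficult step: the only substantive input --- that $R$ preserves coproducts, which both transfers compactness through $Q_G$ and triggers the density theorem --- is already supplied by \Cref{Lem:Q(G)}.
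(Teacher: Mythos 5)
Your proposal is correct and follows essentially the same route as the paper: inherit the tensor triangulated structure from the quotient by a localizing tensor ideal, transfer compactness through $Q_G$ via the coproduct-preserving right adjoint~$R$, observe that killing $\Cont(G)$ leaves $\unit$ as a compact-rigid generator, and invoke Neeman's localization/density theorem in its countable form from \cite[\S2]{DellAmbrogio10} for the fully faithful dense embedding of $\Cell(G)^c/\Thick(\Cont(G))$ into $\cat Q(G)^c$.
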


\begin{proof}
  Again, these are standard consequences.  Clearly, being the
  quotient of $\Cell(G)$ by a localizing tensor ideal, the category
  $\cat Q(G)$ inherits from $\Cell(G)$ a tensor triangulated
  structure and countable coproducts, and these are preserved by the
  quotient functor~$Q_G$.

  The functor $Q_G$ has a coproduct-preserving right adjoint.
  A short computation using this shows that it preserves compact
  objects.  One verifies similarly that $\cat Q(G)$ is generated by
  the image under $Q_G$ of the compact generators of~$\Cell(G)$.
  Since $Q_G(\Cont(G))\cong 0$ by construction, the compact-rigid
  object $\unit_{\cat Q(G)}= Q_G(\unit_{\Cell(G)})$ suffices.  The
  remaining claims, which are harder, are all part of Neeman's
  localization theorem, in its countable form (see
  \cite[Thm.\,2.10]{DellAmbrogio10}).
\end{proof}

Let us explicitly record another, immediate consequence of \Cref{Lem:Q(G)}:

\begin{Cor} \label{Cor:Endos}
The right adjoint $R$ of $Q_G$ restricts to a canonical equivalence $\cat Q(G)\overset {\sim}{\to} \Img(R)=\cat N$ of triangulated categories, which further restricts to an isomorphism $\End_{\cat Q(G)}(\unit)_* \overset{\sim}{\to}  \End_{\Cell(G)}(N)_*$ of graded endomorphism rings.
\qed
\end{Cor}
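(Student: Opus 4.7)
The plan is to extract the statement directly from \Cref{Lem:Q(G)} with minimal extra work, since the corollary is billed as immediate. First I would recall that, by \Cref{Lem:Q(G)}, the right adjoint $R\colon \cat Q(G)\to \Cell(G)$ is fully faithful with essential image $\cat N=\Loc(\Cont(G))^\perp$. A fully faithful functor always corestricts to an equivalence onto its essential image, giving $\cat Q(G)\overset{\sim}{\to} \cat N$; and $R$, being a right adjoint between triangulated categories whose left adjoint $Q_G$ is exact (see \Cref{Cor:Q(G)}), is itself exact. Hence the equivalence is one of triangulated categories.

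Next, for the endomorphism ring statement, I would identify the tensor unit of $\cat Q(G)$ as $\unit_{\cat Q(G)}=Q_G(\unit)$ and apply $R$ to it, getting $R(\unit_{\cat Q(G)})=R\circ Q_G(\unit)=N$ by the very definition of $N$ in \Cref{Lem:Q(G)}(3). Now I would use full faithfulness of $R$ together with the fact that $R$ commutes with suspension to obtain, in each degree $n\in \bbZ/2$, a bijection
\[
\End_{\cat Q(G)}(\unit)_n
=\Hom_{\cat Q(G)}(\unit,\Sigma^n\unit)
\overset{\sim}{\to}
\Hom_{\Cell(G)}(R\unit,R\Sigma^n\unit)
\cong \Hom_{\Cell(G)}(N,\Sigma^n N)
=\End_{\Cell(G)}(N)_n.
\]
Since $R$ is a functor, this bijection respects composition, so it assembles into an isomorphism of graded rings. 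This completes the proof.

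There is really no obstacle here: the work has already been done in \Cref{Lem:Q(G)}. The only two points worth making explicit are (a) that the right adjoint of an exact functor between triangulated categories is automatically exact (standard), and (b) that full faithfulness of $R$ on a single object suffices to give the endomorphism ring isomorphism, since the graded Hom groups are already encoded in the triangulated structure via the suspension $\Sigma$.
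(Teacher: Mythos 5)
Your proof is correct and matches the paper's intent exactly: the paper records this corollary as an immediate consequence of \Cref{Lem:Q(G)} with no written proof, and your argument simply spells out the standard details (a fully faithful exact right adjoint corestricts to a triangulated equivalence onto its essential image, $R$ sends $\unit_{\cat Q(G)}$ to $N$ by definition, and full faithfulness compatible with $\Sigma$ yields the graded ring isomorphism). No issues.
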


In the remainder of this section, we move beyond abstract
generalities.  The next two propositions classify the objects of
$\mathcal Q(G)$ algebraically and compute the graded endomorphism
ring of its unit.  We will actually work within~$\cat N$ (thanks to
\Cref{Cor:Endos}) and use K\"ohler's UCT as well as a refinement due
to the second author (see \cite{Meyer19pp}).  K\"ohler's invariant
$F_*\colon \KK^G\to \mathfrak K\MMod^{\mathbb Z/2}_\infty$ is
recalled in \Cref{sec:UCT}.  Its target is the category of
$\mathbb Z/2$-graded countable left $\mathfrak K$-modules.  Recall
that any $\mathfrak K$-module decomposes canonically as
$M=M_0\oplus M_1\oplus M_2$.

\begin{Lem}
  \label{Lem:alg-class-N_1}
  Suppose that $M\in \mathfrak K\MMod^{\mathbb Z/2}_\infty$ is exact
  as in Remark~\textup{\ref{Rem:iso_classif}} and satisfies $M_1=0$.
  Then~$M$ is uniquely $p$-divisible and is entirely determined
  by~$M_0$ with its natural structure of graded
  $\mathbb
  Z[\vartheta,p^{-1}]$-module.  
  Similarly, every $\mathfrak K$-linear map of such modules is
  uniquely determined by its restriction to their $M_0$-parts.  This
  gives an equivalence
  \[
    \{M \textrm{ exact and }M_1=0\} \overset{\sim}{\longrightarrow}\mathbb Z[\vartheta,p^{-1}]\MMod^{\mathbb Z/2}_\infty  , \qquad M\mapsto M_0,
  \]
  between the full subcategory of exact $\mathfrak K$-modules~$M$
  with $M_1=0$ and the category of $\mathbb Z/2$-graded countable
  $\mathbb Z[\vartheta,p^{-1}]$-modules.
\end{Lem}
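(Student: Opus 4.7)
The plan is to deduce the equivalence by reducing the structure of any exact module $M$ with $M_1=0$ entirely to data on $M_0$, leveraging K\"ohler's explicit description of $\mathfrak K$ and its two exactness sequences.

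First, I would unpack the two exactness sequences from \Cref{Rem:iso_classif}. For $M = F_*(A)$ these are the long exact sequences obtained by applying $\KK^G_*(-,A)$ to the mapping cone triangle $\Sigma \Cont(G) \to D \to \unit \to \Cont(G)$ and to its Baaj--Skandalis dual triangle. Each yields a 6-term periodic sequence cycling through $M_0$, $M_1$ and $M_2$; setting $M_1=0$ degenerates each into an isomorphism $M_0 \cong M_2$ in every $\mathbb Z/2$-degree, realised by an explicit morphism in $\mathfrak K$. Consequently the summand $M_2$ and all its $\mathfrak K$-action data are determined from $M_0$, reducing the classification problem to the $\mathfrak K$-action on $M_0$.

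Second, I would analyse the $\Rep(G) = \mathbb Z[x]/(x^p-1)$-action on $M_0$ coming from the embedding $\End_{\Cell(G)}(\unit) \hookrightarrow \mathfrak K$. The goal is to show that under $M_1=0$ and both exactness conditions, this action factors through the quotient $\mathbb Z[x, p^{-1}]/(\Phi_p) \cong \mathbb Z[\vartheta, p^{-1}]$, i.e.\ that $p$ acts invertibly on $M_0$ and $\Phi_p(x)$ acts as zero. This step uses K\"ohler's explicit presentation of $\mathfrak K$ together with its refinements in \cite{Meyer19pp}: the composites of the generating morphisms of $\mathfrak K$ determine polynomial identities among $p$, $\Phi_p(x)$, and the projections onto $M_0$, $M_1$, $M_2$. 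Combined with the degeneration of both exactness sequences at $M_1=0$, these identities yield the claimed factorisation.

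Third, to promote this to an equivalence of categories I would argue as follows. For full faithfulness, a $\mathfrak K$-linear map $f\colon M \to M'$ between modules as in the hypothesis has $f_1 = 0$ automatically, while $f_2$ is forced by $f_0$ through $\mathfrak K$-equivariance along the iso $M_0 \cong M_2$ from step one; any $\mathbb Z[\vartheta, p^{-1}]$-linear map $M_0 \to M_0'$ thus lifts uniquely. For essential surjectivity, given a graded countable $\mathbb Z[\vartheta, p^{-1}]$-module $N$, set $M_0 := N$, $M_1 := 0$ and $M_2 := N$ (in the appropriate degree), and equip $M = M_0 \oplus M_1 \oplus M_2$ with the $\mathfrak K$-action dictated by steps one and two; exactness then holds by construction.

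The principal obstacle is the second step: the passage from abstract $\mathfrak K$-exactness and $M_1 = 0$ to the concrete algebraic identities ($p$ invertible, $\Phi_p(x) = 0$) on $M_0$ requires a careful, simultaneous use of both exact sequences, since the na\"ive factorisation of $p \in \End_{\Cell(G)}(\unit) = \Rep(G)$ through $\Cont(G)$ alone does not suffice to deliver $p$-invertibility on $M_0$. This is precisely where K\"ohler's and Meyer's detailed description of the generators and relations of $\mathfrak K$ enters as the crucial input.
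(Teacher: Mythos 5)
Your overall strategy is the same as the paper's --- reduce an exact $\mathfrak K$-module $M$ with $M_1=0$ to the $\mathbb Z[\vartheta,p^{-1}]$-module $M_0$, then check that morphisms and objects match up on both sides --- but your execution leaves the central claim unproven. The paper does not re-derive the structure from a presentation of $\mathfrak K$; it invokes \cite[Thm.~7.2]{Meyer19pp} as a black box: for an exact $\mathfrak K$-module, $p$-divisibility of $M_1$ (here trivial, since $M_1=0$) forces the \emph{whole} of $M$ to be uniquely $p$-divisible and to have the explicit shape of \cite[Ex.~7.1]{Meyer19pp}, namely $M_0=X\oplus Y$, $M_1=X\oplus Z$, $M_2=Y\oplus\Sigma Z$ with $X$ a $\mathbb Z[p^{-1}]$-module and $Y,Z$ being $\mathbb Z[\vartheta,p^{-1}]$-modules; $M_1=0$ then kills $X$ and $Z$, and the same structural result supplies the unique extension of maps $M_0\to M_0'$ to $\mathfrak K$-linear maps. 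Your step two is exactly the content of that theorem, and you assert it rather than prove it: you say that ``polynomial identities among $p$, $\Phi_p(x)$, and the projections'' combined with the degenerated exact sequences ``yield the claimed factorisation,'' but you exhibit no such identities, and you yourself concede that the obvious candidate (the factorisation of $\Phi_p(x)$ through $\Cont(G)$, which does give $\Phi_p(x)\cdot M_0=0$) does not deliver invertibility of $p$ on $M_0$. As written, the $p$-divisibility of $M_0$ --- without which the target category $\mathbb Z[\vartheta,p^{-1}]\MMod^{\mathbb Z/2}_\infty$ is simply wrong --- is a genuine gap.

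A second, related gap sits in your first step: knowing that the two six-term sequences degenerate to isomorphisms $M_0\cong M_2$ tells you how the two triangle maps act, but not how all of $\mathfrak K$ acts. The ring $\mathfrak K$ contains, for instance, the full graded endomorphism ring of $D$ and possibly further morphisms not generated by the triangle maps and $\End(\unit)$; to conclude that ``$M_2$ and all its $\mathfrak K$-action data are determined from $M_0$'' you need to know that $\mathfrak K[p^{-1}]$, modulo morphisms factoring through $\Cont(G)$, is generated by $\End(\unit)$ together with the maps $\unit\leftrightarrow D$. That, too, is part of Meyer's structural description and is not a formal consequence of exactness. Either cite \cite[Thm.~7.2 and Ex.~7.1]{Meyer19pp} explicitly, or carry out the computation of the relevant composites in $\mathfrak K$; at present the proof stops precisely where the work begins.
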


\begin{proof}
  Let $M$ be an exact $\mathfrak K$-module whose $M_1$-part
  vanishes.  In particular, and trivially, the
  abelian group $M_1$ is $p$-divisible.  Then
  \cite[Thm.\,7.2]{Meyer19pp} applies to~$M$ and says that the whole
  group~$M$ is uniquely $p$-divisible and that its
  $\mathfrak K$-module structure is of the form described in
  \cite[Ex.\,7.1]{Meyer19pp}.  In particular,
\[
M_0 = X \oplus Y , \qquad
M_1 = X \oplus Z , \qquad
M_2 = Y \oplus \Sigma Z,
\]
where $X$ is some $\mathbb Z/2$-graded $\mathbb Z[p^{-1}]$-module
and $Y,Z$ are some $\mathbb Z/2$-graded
$\mathbb Z[\vartheta,p^{-1}]$-modules.  The decompositions
of $M_0$ and~$M_1$ arise from the action of $\mathfrak K[p^{-1}]$ as
in \Cref{Rem:invert_p}.  Since $M_1=0$ in our case, it follows that
$X=Z=0$.  So
\begin{equation}
  \label{eq:special_form_module}
  M_0 = Y, \qquad
  M_1 = 0 , \qquad
  M_2 = Y.
\end{equation}
The construction in \cite[Ex.\,7.1]{Meyer19pp} shows that the whole
$\mathfrak K[p^{-1}]$-action on such a $\mathfrak K$-module~$M$ is
determined by the $\mathbb Z[\vartheta,p^{-1}]$-action on $Y=M_0$.
Similarly, every $\mathbb Z[x]/(x^p-1)$-linear map $Y\to Y'$ admits
a unique $\mathfrak K$-linear extension $M\to M'$ to the
corresponding $\mathfrak K$-modules.  This proves the lemma.
\end{proof}

\begin{Prop}
  \label{Prop:alg-class-N_2}
  The functor $F_*$ restricts between $\cat N$ and the full
  subcategory of $\mathfrak K\MMod^{\mathbb Z/2}_\infty$ of those exact $M$ such that $M_1=0$ as in Lemma~\textup{\ref{Lem:alg-class-N_1}}.  In particular,
  the $G$-equivariant K-theory functor
  \[
    A\mapsto F_*(A)_0 := \KK^G_*(\unit, A)
  \]
  induces a bijection between the isomorphism classes of objects in
  $\cat N$ and those in
  $\mathbb Z[\vartheta,p^{-1}]\MMod^{\mathbb Z/2}_\infty$.
\end{Prop}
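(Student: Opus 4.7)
The plan is to combine the defining property of $\cat N$ with K\"ohler's UCT and the preceding lemma; no new computation is really needed.

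First I would unpack the relevant definitions. By equation \eqref{eq:F_*(A)}, for any $A\in\Cell(G)$ the degree-one summand of the K\"ohler invariant is precisely
\[
F_*(A)_1 = \KK^G_*(\Cont(G), A).
\]
On the other hand, by \Cref{Lem:Q(G)}\,(1), membership in~$\cat N$ is characterized by $\cat N = \Cont(G)^\perp = \{A\in\Cell(G) \mid \KK^G_*(\Cont(G),A)=0\}$. Hence $A\in\cat N$ if and only if $F_*(A)_1=0$. Combined with the UCT, which guarantees that $F_*(A)$ is always an exact $\mathfrak K$-module in the sense of \Cref{Rem:iso_classif}, this shows that $F_*$ does restrict to a functor from~$\cat N$ into the full subcategory of exact $\mathfrak K$-modules with $M_1=0$.

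Next I would establish the bijection on isomorphism classes. By \Cref{Rem:iso_classif}, $F_*$ already induces a bijection between isomorphism classes of $G$-cell algebras and isomorphism classes of exact countable $\mathbb Z/2$-graded $\mathfrak K$-modules. Since the subclass $\cat N \subset \Cell(G)$ corresponds under this bijection precisely to the subclass of exact modules with $M_1=0$ (by the first step), we obtain an induced bijection on isomorphism classes between these two subclasses. Finally, \Cref{Lem:alg-class-N_1} provides an equivalence from the category of exact $\mathfrak K$-modules with $M_1 = 0$ to $\mathbb Z[\vartheta, p^{-1}]\MMod^{\mathbb Z/2}_\infty$ via $M \mapsto M_0$, which in particular is a bijection on isomorphism classes. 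Composing yields the desired bijection, and tracing the composite $A \mapsto F_*(A) \mapsto F_*(A)_0$ shows it is exactly the functor $A\mapsto \KK^G_*(\unit,A)$ advertised in the statement.

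There is no real obstacle here: the hard work has already been done in K\"ohler's UCT (\Cref{Rem:iso_classif}) and in \Cref{Lem:alg-class-N_1}. The only point demanding the slightest care is the identification of $F_*(A)_1$ with $\KK^G_*(\Cont(G),A)$, which matches the decomposition \eqref{eq:F_*(A)} of the K\"ohler invariant.
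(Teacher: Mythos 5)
Your proposal is correct and follows essentially the same route as the paper: identify $F_*(A)_1=\KK^G_*(\Cont(G),A)$ so that $A\in\cat N$ iff $F_*(A)_1=0$, invoke K\"ohler's classification (\Cref{Rem:iso_classif}) for the bijection on isomorphism classes, and compose with the equivalence of \Cref{Lem:alg-class-N_1}. No issues.
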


\begin{proof}
We know from K\"ohler's classification
(\Cref{Rem:iso_classif}) that $F_*$ induces a bijection between the
isomorphism classes of $G$-cell algebras and those of exact 
countable graded modules.  If $A\in \mathcal N =
\Loc(\Cont(G))^\perp$, then
\begin{equation} \label{eq:}
F_*(A)_1 := \KK^G_*(\Cont(G),A)=0.
\end{equation}
So $F_*(A)$ is a $\mathfrak K$-module as in \Cref{Lem:alg-class-N_1}.  Hence the
functor $F_*$ restricts as claimed.  Moreover, any (graded
countable) $\mathbb Z[\vartheta,p^{-1}]$-module gives rise to a
unique exact (countable \(\mathbb Z/2\)-graded) $\mathfrak K$-module of the form
\eqref{eq:special_form_module}.  Therefore, K\"ohler's
classification combines with \Cref{Lem:alg-class-N_1} to yield the claimed
classification for~$\cat N$.
\end{proof}

We conclude the section with this central computation:

\begin{Prop}
  \label{Prop:Kth-comput}
  As above, let $G=\mathbb Z/p\mathbb Z$ for a prime number~$p$.
  The graded endomorphism ring of the tensor unit
  $\unit \in \cat Q(G)$ is given by
  \[
    \End_{\cat Q(G)}(\unit)_* \cong \mathbb Z[\vartheta, p^{-1}, \beta^{\pm 1}],
  \]
  where~$\vartheta$ is a primitive $p$-th root of unity \textup(in
  the sense of Remark~\textup{\ref{Rem:invert_p}} and set in degree
  zero\textup) and where~$\beta$ is the invertible Bott element
  \textup(in degree two\textup).  More precisely, the restriction
  $\End_{\Cell(G)}(\unit)_*\to \End_{\cat Q(G)}(\unit)_*$ of the
  localization functor~$Q_G$ identifies with the canonical
  grading-preserving ring map which inverts $p$ and kills the ideal
  generated by $\Phi_p(x)=1+x+\cdots + x^{p-1}$:
  \[
    \mathbb Z[x]/(x^p-1)[\beta^{\pm 1}] \longrightarrow \mathbb Z[x, p^{-1}]/(\Phi_p)\ [\beta^{\pm 1}] = \mathbb Z[\vartheta, p^{-1}, \beta^{\pm 1}] .
  \]
\end{Prop}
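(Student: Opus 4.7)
Plan: Under the canonical identification $\End_{\cat Q(G)}(\unit)_* \cong \KK^G_*(\unit, N) = F_*(N)_0$ coming from \Cref{Cor:Endos} and the adjunction $Q_G \adj R$ of \Cref{Lem:Q(G)}, the plan is first to factor the restriction map $R(G)[\beta^{\pm 1}] \to \End_{\cat Q(G)}(\unit)_*$ through the claimed target, and then to establish the isomorphism by exploiting the idempotent decomposition of $R(G)[p^{-1}]$.

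For the factorization: the element $\Phi_p(x) \in R(G) = \End_{\Cell(G)}(\unit)_0$ factors through $\Cont(G)$ as the composition $\unit \xrightarrow{\eta} \Cont(G) \xrightarrow{\varepsilon} \unit$ of the unit and counit of the adjunction $\Ind^G_1 \adj \Res^G_1$ (equivalently, $\Phi_p(x) = [\Cont(G)]$ is the class of the regular representation). Since $Q_G(\Cont(G)) = 0$, this composition vanishes in $\cat Q(G)$, killing $\Phi_p(x)$. On the other hand, by \Cref{Prop:alg-class-N_2} we have $F_*(N)_1 = 0$, and then \Cref{Lem:alg-class-N_1} makes the $\mathfrak K$-module $F_*(N)$ uniquely $p$-divisible; in particular $p$ acts invertibly on $F_*(N)_0 = \End_{\cat Q(G)}(\unit)_*$. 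Hence the map factors through $\mathbb Z[\vartheta, p^{-1}, \beta^{\pm 1}]$ as claimed.

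To show the factored map is an isomorphism, apply $\KK^G_*(\unit, -)$ to the localization triangle $P \to \unit \to N \to \Sigma P$ of \Cref{Lem:Q(G)} and invert $p$: this yields a long exact sequence of $R(G)[p^{-1}]$-modules that splits via the central orthogonal idempotents $e_1, e_2$ arising from the product decomposition $R(G)[p^{-1}] \cong \mathbb Z[p^{-1}] \times \mathbb Z[\vartheta, p^{-1}]$ of \Cref{Rem:invert_p}. By Frobenius reciprocity, $\KK^G_*(\unit, \Cont(G)) \cong \mathbb Z[\beta^{\pm 1}]$ with $R(G)$-action factoring through the augmentation $R(G) \twoheadrightarrow \mathbb Z$; in particular, after $p$-localization, the idempotent $e_2$ annihilates it. A short induction on the triangulated closure (using $R(G)$-linearity of the connecting maps and the centrality of $e_2$) extends this $e_2$-annihilation to every object of $\Loc(\Cont(G))$, so that $e_2 \cdot \KK^G_*(\unit, P)[p^{-1}] = 0$. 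Splitting the $p$-local exact sequence via $e_2$ then yields
\[
  \mathbb Z[\vartheta, p^{-1}, \beta^{\pm 1}] = e_2 R(G)[p^{-1}, \beta^{\pm 1}] \xrightarrow{\sim} e_2 \KK^G_*(\unit, N),
\]
the left-to-right arrow being induced by the unit $\unit \to N$. Finally, by the structural decomposition used in the proof of \Cref{Lem:alg-class-N_1}, the $e_1$-summand $X$ of $F_*(N)_0$ vanishes, so $e_2 \KK^G_*(\unit, N) = \KK^G_*(\unit, N)$. The explicit form of the map follows by tracking $x \in R(G)$ through these identifications: $x$ acts as the primitive $p$-th root of unity~$\vartheta$ on the $e_2$-summand.

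The main technical point lies in the triangulated-closure step—verifying that the $e_2$-annihilation of $\KK^G_*(\unit, -)[p^{-1}]$ extends from $\Cont(G)$ to all of $\Loc(\Cont(G))$. This is straightforward from the centrality of $e_2$ and the $R(G)$-linearity of the maps in the long exact sequence, but it is the step that uses the full categorical structure of the localizing subcategory in an essential way; everything else reduces to bookkeeping within the $p$-local decomposition of \Cref{Rem:invert_p}.
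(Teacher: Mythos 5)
Your proof is correct, but it follows a genuinely different route from the paper's. The paper identifies $\End_{\cat Q(G)}(\unit)_*\cong\KK^G_*(N,N)\cong F_*(N)_0$ and then pins down $N$ up to isomorphism: it produces via \Cref{Prop:alg-class-N_2} an object $R\in\cat N$ whose invariant is free of rank one over $\mathbb Z[\vartheta,p^{-1}]$, proves the vanishing of $\Ext^1_{\mathfrak K}(F_{*+1}R,F_*A)$ for all $A\in\cat N$ by an explicit splitting argument for $\mathfrak K$-module extensions, and concludes $R\cong N$ from the UCT plus the Yoneda lemma. You instead compute $\KK^G_*(\unit,N)$ directly from the long exact sequence of the localization triangle, $p$-localized and split by the central idempotents $e_1=\Phi_p/p$ and $e_2=1-e_1$ of $\Rep(G)[p^{-1}]$; the only structural input you take from the module theory is \Cref{Lem:alg-class-N_1} (unique $p$-divisibility of $F_*(N)$ and vanishing of its $e_1$-component), whereas the UCT exact sequence and the Ext-vanishing disappear entirely. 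Your key triangulated steps all check out: the composite $\unit\to\Cont(G)\to\unit$ is indeed $[\mathbb C G]=\Phi_p(x)$ rather than $p$ (consistency check: $\Res^G_1$ sends it to $\dim\mathbb C G=p$, and $\Phi_p$ augments to $p$), the $\Rep(G)$-action on $\KK^G_*(\unit,\Cont(G))$ factors through the augmentation by Frobenius reciprocity so $e_2$ kills it, and the class of objects $B$ with $e_2\cdot\KK^G_*(\unit,B)[p^{-1}]=0$ is localizing because $\unit$ is compact, so it absorbs $\Loc(\Cont(G))\ni P$. What your approach buys is a more self-contained and more explicit identification of the localization map $\Rep(G)[\beta^{\pm1}]\to\End_{\cat Q(G)}(\unit)_*$ (the ``more precisely'' clause falls out immediately from the idempotent bookkeeping); what the paper's approach buys is the additional identification of $N$ itself as the cell algebra with invariant the free rank-one $\mathbb Z[\vartheta,p^{-1}]$-module, which is slightly more information than the endomorphism ring alone. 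Both arguments ultimately rest on the same external input, namely the structure theorem for exact $\mathfrak K$-modules with vanishing $M_1$-part from \cite{Meyer19pp}.
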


\begin{proof}
Recall the algebra $P \in \Loc(\Cont(G))$ of \Cref{Lem:Q(G)}\,(3).
For any $A$ belonging to $\cat N = \Loc(\Cont(G))^\perp$ we must have
\begin{equation}
  \label{eq:vanishings}
  \KK^G_*(P,A)=0 \quad \textrm{ and } \quad
  F_*(A)_1 := \KK^G_*(\Cont(G),A)=0 .
\end{equation}
The first vanishing group implies that the map $\unit \to N$ of the distinguished triangle~\eqref{eq:localization_triangle} induces a natural isomorphism
\begin{equation} \label{eq:first-nat-iso}
\KK^G_*(N,A) \overset{\sim}{\to} \KK^G_*(\unit , A) = F_*(A)_0
\end{equation}
for all such $A\in \cat N$.
Specializing this to the case $A=N$, we see that
\[
\KK^G_*(N,N) \cong F_*(N)_0 .
\]
We claim that $F_*(N)_0$, and therefore $\KK^G_*(N,N)$, is a free $\mathbb Z[\vartheta,p^{-1}]$-module of rank one and concentrated in degree zero.
 By \Cref{Cor:Endos} and after unwinding $\mathbb Z/2$-gradings by Bott periodicity, this would prove the proposition.

By \Cref{Prop:alg-class-N_2}, there is an object $R\in \mathcal N$ such that $F_*(R)_0$ (and thus $F_*(R)_2$) is a free $\mathbb Z[\vartheta,p^{-1}]$-module of rank one concentrated in degree zero.
To prove the claim, it now suffices to prove that $N$ and $R$ are isomorphic in~$\cat N$.

Consider the UCT exact sequence \eqref{eq:UCT} for an arbitrary object $A\in \cat N$:
\begin{equation} \label{eq:UCT_specialized}
\xymatrix@C=13pt{
0\ar[r] & \Ext^1_{\mathfrak K} (F_{*+1}R, F_*A) \ar[r] & \KK^G_*(R,A) \ar[r]^-{F_*}& \Hom_\mathfrak K(F_*R, F_*A) \ar[r]& 0
}.
\end{equation}
By construction, $F_*(R)_0$ is a free
$\mathbb Z[\vartheta, p^{-1}]$-module of rank one in degree zero.
Hence by \Cref{Lem:alg-class-N_1}, the Hom-term in
\eqref{eq:UCT_specialized} reduces to
\[
\Hom_{\mathbb Z[\vartheta, p^{-1}]} \bigl( F_*(R)_0, F_*(A)_0 \bigr)
\cong F_*(A)_0.
\]
Now, suppose that the Ext-term in \eqref{eq:UCT_specialized}
vanishes.  We would obtain an isomorphism
\[
F_*\colon \KK^G_*(R,A) \overset{\sim}{\to}  F_*(A)_0,
\]
natural in $A\in \cat N$.  By combining it with the isomorphism
\eqref{eq:first-nat-iso} and applying the Yoneda Lemma for the
category~$\cat N$, this would show that $R\cong N$ as wished.  Thus
it only remains to show that the Ext-term of
\eqref{eq:UCT_specialized} vanishes.

Consider an arbitrary extension
\begin{equation} \label{eq:K-ext}
\xymatrix{
0 \ar[r] & F_*(A) \ar[r] & M \ar[r] & F_{*+1} (R) \ar[r] & 0
}
\end{equation}
of graded $\mathfrak K$-modules.  By applying to \eqref{eq:K-ext} the idempotent element of $\mathfrak K$ corresponding to the identity map of~$\Cont(G)$ we obtain an exact sequence

\[
\xymatrix{
0 \ar[r] & F_*(A)_1 \ar[r] & M_1 \ar[r] & F_{*+1} (R)_1 \ar[r] & 0
}
\]
where both outer terms vanish by hypothesis.  Then $M_1=0$.  In
particular, $M_1$ is uniquely $p$-divisible.  Since~$M$ is exact as
an extension of two exact $\mathfrak K$-modules, we may apply
\cite[Thm.\,7.2]{Meyer19pp} to it.  Thus~$M$ also has the special
form of \Cref{Lem:alg-class-N_1} and is uniquely determined by
its $M_0$-part, viewed as a $\mathbb Z[\vartheta,p^{-1}]$-module.
Now consider the extension of graded
$\mathbb Z[\vartheta,p^{-1}]$-modules
\[
\xymatrix{
0 \ar[r] & F_*(A)_0 \ar[r] & M_0 \ar[r] & F_{*+1} (R)_0 \ar[r] & 0
}
\]
obtained by hitting \eqref{eq:K-ext} with the idempotent of
$\mathfrak K$ corresponding to the identity of~$\unit$.  This
extension must split because $F_{*+1} (R)_0$ is a free module.
Moreover, by \Cref{Lem:alg-class-N_1} once again, any
$\mathbb Z[\vartheta,p^{-1}]$-linear section of
$M_0 \to F_{*+1} (R)_0$ extends to a $\mathfrak K$-linear section of
$M\to F_{*+1} (R)$.  Thus the original extension \eqref{eq:K-ext} of
$\mathfrak K$-modules splits as well.  As the latter extension was
arbitrary, this implies that
$\Ext^1_{\mathfrak K} (F_{*+1}R, F_*A) =0 $ as required.
\end{proof}

\section{The Balmer spectrum}
\label{sec:ttg}%
\medskip

We very briefly recall some basic notions of tensor triangular
geometry, referring to \cite{BalmerICM} and the original references
therein for more details.

Let~$\cat K$ be an essentially small tensor triangulated category,
with tensor $\otimes$ and unit object~$\unit$.  Its \emph{spectrum}
$\Spc \cat K$ is the set of its \emph{prime
  $\otimes$-ideals}~$\mathcal P$, that is, those proper, full and
thick subcategories $\cat P \subsetneq \cat K$ which are prime
tensor ideals for the tensor product: $A \otimes B \in \cat P$
if and only if $A\in \cat P$ or $B\in \cat P$, for any objects
$A,B\in \cat K$.  The spectrum is endowed with the `Zariski'
topology, which has the family of subsets
\begin{equation} \label{eq:supp}
\supp (A) := \{\cat P\in \Spc \cat K \mid A \notin \cat P \}
\qquad (A \in \cat K)
\end{equation}
as a basis of closed subsets.

The ring $\End_\cat K(\unit)$ is commutative.  So we can consider
the usual Zariski spectrum of its prime ideals,
$\Spec \End_\cat K(\unit)$.  The assignment
\[
  \cat P \mapsto \rho_\cat K(\cat P)
  :=\{ f\in \End_\cat K(\unit)\mid \cone(f)\not\in \cat P \}
\]
defines a continuous map between the two spectra:
\begin{equation}
  \label{eq:comparison}
  \rho_\cat K\colon \Spc \cat K \longrightarrow \Spec \End_\cat K(\unit)
\end{equation}
(see \cite[Thm.\,5.3]{Balmer10b}).  In general, this comparison map
is neither injective nor surjective.  Surjectivity is more common
and often much easier to prove.

\begin{Rem} \label{Rem:funSpc}
The spectrum is functorial: Any exact tensor functor $F\colon \cat K \to \cat L$ defines a continuous map $\Spc F\colon \Spc \cat L\to \Spc \cat K$ sending a prime $\cat P$ of $\cat L$ to $(\Spc F )(\cat P) := F^{-1}\cat P$.  Moreover, $\Spc (F_2 \circ F_1)= \Spc F_1 \circ \Spc F_2$ and $\Spc \Id = \Id$.
\end{Rem}

\begin{Rem} \label{Rem:rho-nat}

  The comparison map \eqref{eq:comparison} is natural (see
  \cite[Thm.\,5.3\,(c)]{Balmer10b}); that is, if
  $F\colon \cat K\to \cat L$ is an exact tensor functor, then the
  square
  \[
    \xymatrix{
      \Spc \cat L \ar[rr]^-{\Spc F} \ar[d]_{\rho_\cat K} &&
      \Spc \cat K \ar[d]^{\rho_\cat L}  \\
      \Spec \End_\cat L(\unit) \ar[rr]^-{\Spec F} &&
      \Spec \End_\cat K(\unit)
    }
  \]
  commutes; here the bottom arrow is the continuous map
  $\mathfrak p\mapsto F^{-1}\mathfrak p$ induced between Zariski
  spectra by the ring homomorphism
  $F\colon \End_\cat K(\unit)\to \End_\cat L(\unit)$.
\end{Rem}

\section{Computation of the spectrum}
\label{sec:spc}%
\medskip

Finally, we are ready to prove \Cref{Thm:main}.

Let $G= \mathbb Z/p\mathbb Z$ as before.  Let
$\Cell(G)\subset \KK^G$ be the tensor triangulated category of
$G$-cell algebras (\Cref{Def:Cell}).  Let
$\Cell(G)^c\subset \Cell(G)$ be the tensor triangulated subcategory
of its compact objects (\Cref{Rem:cpts}).  Write
\[
\rho_G := \rho_{\Cell(G)^c} \colon \Spc  \Cell(G)^c \to \Spec \Rep(G)
\]
for the canonical map \eqref{eq:comparison} comparing the triangular spectrum of $\Cell(G)^c$ to the Zariski spectrum of the representation ring $\Rep(G)=\End_{\Cell(G)^c}(\unit) = \mathbb Z[x]/(x^p-1)$.

Our goal is to show that $\rho_G$ is a homeomorphism.
To this end, we will adapt the `divide and conquer' strategy of~\cite{BalmerSanders17}  to the two exact tensor functors
\begin{equation} \label{eq:decomp_functors}
\xymatrix{
\Cell(1) & \Cell(G) \ar[l]_-{\Res^G_1} \ar[r]^-{Q_G} & \cat Q(G)
}
\end{equation}
given by restriction to the trivial group, $\Res^G_1$, and the
Verdier quotient functor~$Q_G$ studied in \Cref{sec:Q(G)}.

\begin{Lem}
  \label{Lem:reg-noeth}
  The graded commutative ring
  $
  \End_{\cat Q(G)}(\unit)_* \cong \mathbb Z[\vartheta, p^{-1}, \beta^{\pm 1}]
  $
  computed in Proposition~\textup{\ref{Prop:Kth-comput}} is
  Noetherian and regular.
\end{Lem}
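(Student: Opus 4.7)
The plan is to recognize $R^\bullet := \mathbb{Z}[\vartheta, p^{-1}, \beta^{\pm 1}]$ as an iterated construction from $\mathbb{Z}$ by operations that preserve Noetherianness and regularity, and then argue that the grading causes no new trouble because $\beta$ sits in even degree.

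First I would unwind the ring. As noted in \Cref{Rem:invert_p}, the quotient $S := \mathbb{Z}[x, p^{-1}]/(\Phi_p(x))$ is isomorphic to $\mathbb{Z}[\vartheta, p^{-1}]$. Since $\mathbb{Z}[\vartheta]$ is the ring of integers of the cyclotomic field $\mathbb{Q}(\zeta_p)$, it is a Dedekind domain, hence a Noetherian, commutative, regular ring of Krull dimension one. Localization preserves both Noetherianness and regularity, so $S = \mathbb{Z}[\vartheta, p^{-1}]$ is Noetherian and regular. (In fact, inverting~$p$ here makes $S$ an \'etale $\mathbb{Z}[p^{-1}]$-algebra, but we do not need this.)

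Next I would pass to the Laurent polynomial ring $R := S[\beta^{\pm 1}]$. The polynomial ring $S[\beta]$ is Noetherian by the Hilbert basis theorem, and regular because for any prime $\mathfrak{q}$ of $S[\beta]$ the local ring $S[\beta]_\mathfrak{q}$ is a localization of a regular local ring of the form $S_\mathfrak{p}[\beta]_\mathfrak{m}$, where $\mathfrak{p} = \mathfrak{q} \cap S$; by the classical result that polynomial extensions of regular rings remain regular, this gives regularity of $S[\beta]$, and then inverting $\beta$ preserves both properties. So $R = S[\beta^{\pm 1}]$ is Noetherian and regular as an ungraded commutative ring.

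Finally I would observe that the $\mathbb{Z}$-grading (with $\vartheta$ in degree~$0$ and $\beta$ in degree~$2$) is entirely concentrated in even degrees, so the graded-commutativity of $\End_{\cat Q(G)}(\unit)_*$ involves no signs and $R^\bullet$ is just an honestly commutative $\mathbb{Z}$-graded ring. The homogeneous prime ideals of $S[\beta^{\pm 1}]$ are in bijection with the prime ideals of~$S$, via $\mathfrak{p} \mapsto \mathfrak{p}[\beta^{\pm 1}]$, and the corresponding graded localizations are regular because their ungraded localizations $S_\mathfrak{p}[\beta^{\pm 1}]$ are. Hence $R^\bullet$ is graded-Noetherian and graded-regular, in whatever reasonable sense one takes from \cite{DellAmbrogioStanley16}. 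There is no real obstacle: the statement is a short chain of well-known preservation theorems in commutative algebra, and the only thing to double-check is that the $\mathbb{Z}/2$-grading imposed by Bott periodicity does not perturb the argument, which it does not because $\beta$ has even degree.
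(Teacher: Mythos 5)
Your proposal is correct and follows essentially the same route as the paper: identify $\mathbb{Z}[\vartheta,p^{-1}]$ as a localization of the Dedekind domain $\mathbb{Z}[\zeta_p]$, hence Noetherian and regular, and then pass to the evenly graded $2$-periodic Laurent extension, whose homogeneous primes correspond to primes of the degree-zero subring. The paper compresses that last step into one sentence, whereas you spell it out; the content is the same.
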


\begin{proof}
  The degree zero subring $\mathbb Z[\vartheta, p^{-1}]$ is
  Noetherian because it is finitely generated and commutative.  It
  is regular because it is a localization of the classical Dedekind
  domain
  $\mathbb Z[\vartheta]\cong \mathbb Z[\zeta_p]\subset \mathbb C$,
  where~$\zeta_p$ is a complex primitive $p$-th root of unity.
  It follows that our graded ring is also Noetherian and regular.
\end{proof}

\begin{Rem}
  \label{Rem:Spech-vs-Spec}
  If~$R_*$ is a $\mathbb Z$-graded commutative ring, we can consider
  the Zariski spectrum $\Spech R_*$ of its \emph{homogeneous} prime
  ideals.  The inclusion of the zero degree subring $R_0\subset R_*$
  induces a continuous restriction map $\Spech R_* \to \Spec R_0$.
  If the graded ring $R_*$ is 2-periodic and concentrated in even
  degrees --~that is, $R_* = R_0 [\beta^{\pm 1}]$ with
  $|\beta|=2$~-- the latter map is easily seen to be a homeomorphism
  $\Spech R_* \overset{\sim}{\to} \Spec R_0$.  Of course, this
  applies to our endomorphism rings.
\end{Rem}

\begin{Rem}
  \label{Rem:Spec-decomp}
  Consider the Zariski spectrum of $\Rep(G)=\mathbb Z[x]/(x^p-1)$.
  It has two irreducible components, namely, the images of the two
  embeddings
\[
\xymatrix{
\Spec \mathbb Z \ar[r]^-{\psi} & \Spec \mathbb Z[x]/(x^p-1)  & \Spec \mathbb Z[x]/(\Phi_p) \ar[l]_-{\varphi}
}
\]
induced by the two ring quotients
\[
\xymatrix{
\mathbb Z & \mathbb Z[x]/(x^p-1) \ar[l] \ar[r] & \mathbb Z[x]/(\Phi_p)
}
\]
given by killing $x-1$ and $\Phi_p=1 + x + \cdots + x^{p-1}$,
respectively; these are the two irreducible factors of~$x^p-1$
in~$\mathbb Z[x]$.  Thus the maps $\psi$ and $\varphi$ are jointly
surjective.  The intersection of their images can be shown to
consist exactly of one point lying above~$p$, namely, the maximal
ideal $\psi((p))=(p,x-1)=(p,\Phi_p)=\varphi((p))$.  By inverting $p$
in $\mathbb Z[x]/(\Phi_p)$ we get rid of precisely the preimage
under $\varphi$ of this common point, thus eliminating the
redundancy.  In particular, we obtain a decomposition
\[
\Spec \Rep(G) \cong \Spec \mathbb Z \sqcup \Spec \mathbb Z[x, p^{-1}]/(\Phi_p)
\]
as sets.  As before, we write
$\mathbb Z[\vartheta, p^{-1}] := \mathbb Z[x, p^{-1}]/(\Phi_p)$.
(This is all well-known; see, for instance, \cite{BuenoDokuchaev08}
and the references therein for explanations and context.)
\end{Rem}

Now we know everything we need about the geometry of~$Q_G$.
As for the restriction functor $\Res^G_1\colon \Cell(G)^c\to \Cell(1)^c$, we need the following:

\begin{Lem}  \label{Lem:sepSpc}
The equality $\supp \Cont(G)= \Img(\Spc \Res^G_1)$, between the support of the object $\Cont(G)$ and the image of the map $\Spc(\Res^G_1)$, holds  in $\Spc \Cell(G)^c$.
\end{Lem}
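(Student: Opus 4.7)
The plan is to exploit the separable monadicity of the restriction functor $\Res^G_1$, as established in~\cite{BalmerDellAmbrogioSanders15}. That result identifies $\Res^G_1\colon \Cell(G)^c \to \Cell(1)^c$ with the extension-of-scalars functor $F_A\colon \Cell(G)^c \to A\MMod_{\Cell(G)^c}$ associated to the commutative separable algebra $A = \Cont(G)$ in $\Cell(G)^c$ (with its diagonal $G$-action), under an equivalence $\Cell(1)^c \simeq A\MMod_{\Cell(G)^c}$ of tensor triangulated categories. The relevant separable algebra is $\Ind^G_1 \Res^G_1(\unit) \cong \Cont(G)$, which explains why $\Cont(G)$ appears in the statement.

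Once this identification is in place, the equality becomes an instance of a general tt-geometric fact about separable algebras. Namely, for any commutative separable algebra $A$ in an essentially small tensor triangulated category $\cat K$, the continuous map induced on Balmer spectra by extension of scalars satisfies
\[
  \Img\bigl(\Spc(F_A)\colon \Spc(A\MMod_\cat K) \to \Spc \cat K\bigr) = \supp(A).
\]
Concretely, a prime $\cat P \in \Spc \cat K$ lies in the image precisely when $A \notin \cat P$: if $A \in \cat P$, then $F_A$ factors $\unit$ through objects whose restriction lands in~$\cat P$, contradicting that the pullback prime is proper; conversely, when $A \notin \cat P$, the subcategory $F_A^{-1}(\cat Q) = \cat P$ can be realized by a suitable prime $\cat Q$ of $A\MMod_\cat K$ using that $A$ splits off from $A \otimes A$ via the multiplication, thanks to separability. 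Both halves of this equivalence appear as part of the geometric consequences of separability worked out in~\cite{BalmerDellAmbrogioSanders15}.

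Combining the two ingredients yields the chain of equalities
\[
  \Img(\Spc \Res^G_1) \;=\; \Img(\Spc F_A) \;=\; \supp(A) \;=\; \supp \Cont(G)
\]
in $\Spc \Cell(G)^c$, which is the claimed identity. Since both ingredients are already in the literature, no serious obstacle arises; the only point requiring minor care is the correct identification of the separable algebra produced by monadicity with $\Cont(G)$, and the verification that the functorial naturality of $\Spc$ (\Cref{Rem:funSpc}) makes this identification commute with $\Spc \Res^G_1$, which is automatic once the equivalence $\Cell(1)^c \simeq A\MMod_{\Cell(G)^c}$ is chosen compatibly with $\Res^G_1$ and $F_A$.
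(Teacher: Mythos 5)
Your proposal is correct and follows essentially the same route as the paper: identify $\Res^G_1$ with extension of scalars along the separable commutative algebra $\Cont(G)=\Ind^G_1(\unit)$ via the monadicity of \cite{BalmerDellAmbrogioSanders15}, then invoke the general equality $\Img(\Spc F_A)=\supp(A)$ for separable algebras. The only adjustments are that this last equality --- in particular its nontrivial half, surjectivity onto $\supp(A)$ --- is properly cited from \cite[Thm.\,1.5]{Balmer16} rather than from \cite{BalmerDellAmbrogioSanders15}, and that the monadicity statement of \cite{BalmerDellAmbrogioSanders15} concerns the full Kasparov categories, so one must note (as the paper does) that the $\Ind^G_1\dashv\Res^G_1$ adjunction restricts to (compact) cell algebras.
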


\begin{proof}
  By \cite[Thm.\,1.2]{BalmerDellAmbrogioSanders15} (see also
  \cite[\S2.4]{BalmerDellAmbrogio20pp}), the restriction functor
  $\Res^G_1$ is a \emph{finite separable extension}.  More
  precisely, this is proved in \cite{BalmerDellAmbrogioSanders15} for the restriction functor
  $\KK^G\to \KK(1)$ between the \emph{whole} Kasparov categories.
  But the result also holds for (compact) cell algebras.  Let us
  briefly recall this.  The two-sided adjunction between $\Ind^G_1$
  and $\Res^G_1$ restricts to a two-sided adjunction between
  $\Cell(G)$ and $\Cell(1)$ (\Cref{Rem:funCell}), and provides us
  with a separable commutative monoid $A^G_1$ in $ \Cell(G)$ whose
  underlying object is $\Ind^G_1(\unit)=\Cont(G)$.  Exactly the same
  proofs as in \cite{BalmerDellAmbrogioSanders15} yield a canonical
  equivalence of tensor triangulated categories
  \[
    \Cell(1) \simeq A^G_1\MMod_{\Cell(G)}
  \]
  between the bootstrap category $\Cell(1)$ and the Eilenberg--Moore
  category of modules in $\Cell(G)$ over the monoid~$A^G_1$.  This
  equivalence identifies $\Res^G_1$ with the `free module' functor
  $F:=A^G_1\otimes-\colon \Cell(G)\to A^G_1\MMod_{\Cell(G)}$.  It may also be restricted to compact objects:
  $\Cell(1)^c\simeq (A^G_1\MMod_{\Cell(G)})^c=
  A^G_1\MMod_{\Cell(G)^c}$.

As with any separable monoid, \cite[Thm.\,1.5]{Balmer16} shows the equality $\Img(\Spc F)= \supp A^G_1$.  Hence
$\Img (\Spc \Res^G_1)=\supp \Cont(G)$ by the above identifications.
\end{proof}

\begin{proof}[Proof of Theorem~\textup{\ref{Thm:main}}]
  We already know from \cite[Thm.\,1.4]{DellAmbrogio10} that the map
  $\rho_G$ admits a continuous section --~even for any finite
  group~$G$.  To show that it is a homeomorphism, it will therefore
  suffice to prove its injectivity.

We claim that the two functors \eqref{eq:decomp_functors} induce the following commutative diagram:
\begin{equation} \label{eq:decomp_maps}
\vcenter{
\xymatrix{
\Spc \Cell(1)^c \ar[d]_{\rho_1} \ar[rr]^-{\Spc \Res^G_1} &&
 \Spc \Cell(G)^c  \ar[d]_{\rho_G}  &&
  \Spc \cat Q(G)^c \ar[ll]_-{\Spc Q_G} \ar[d]_{\rho_{\cat Q(G)^c} \,=:\, \rho_\cat Q} \\
\Spec \mathbb Z \ar[rr] \ar[rr]^-{\psi} &&
 \Spec \mathbb Z[x] / (x^p-1) &&
  \Spec \mathbb Z[\vartheta,p^{-1}] \ar[ll]_-{\varphi}
}}
\end{equation}
Indeed, the top row is obtained by restricting the two functors to
rigid-compact objects and applying the functoriality of $\Spc(-)$
(\Cref{Rem:funSpc}).  The three vertical maps are all instances of
the canonical comparison~\eqref{eq:comparison}.  The two squares
commute because the latter is natural (see \Cref{Rem:rho-nat}).
The bottom row is as in \Cref{Rem:Spec-decomp}: Indeed, the right
arrow is given by inverting $p$ and killing~$\Phi_p$ (by
\Cref{Prop:Kth-comput}) and the left arrow by mapping $x\mapsto 1$
(because it corresponds to the rank homomorphism
$\Rep(G)\to \Rep(1)=\mathbb Z$).

By \Cref{Rem:Spec-decomp}, the bottom row of \eqref{eq:decomp_maps} is a disjoint-union decomposition of the set $\Spec \mathbb Z[x]/(x^p-1)$.

We claim that the top row of \eqref{eq:decomp_maps} provides a
similar decomposition, that is, it consists of two injective and
jointly surjective maps with disjoint images, so that
\begin{eqnarray} \label{eq:top-decomp}
\Spc \Cell(G)^c = \Img( \Spc \Res^G_1) \sqcup \Img( \Spc Q_G)  \,.
\end{eqnarray}
Indeed, we obviously have
\[
\Spc \Cell(G)^c =   \{\mathcal P \mid \Cont(G)\notin \mathcal P\}  \sqcup \{\mathcal P\mid \Cont(G)\in \mathcal P\}\,.
\]
Moreover, by \Cref{Cor:Q(G)} the restriction of $Q_G$ on compact objects factors as a Verdier quotient with kernel $\Thick(\Cont(G))$ followed by a full dense embedding:
\[
\Cell(G)^c \longrightarrow \Cell(G)^c/\Thick(\Cont (G)) \hooklongrightarrow \cat Q(G)^c .
\]
By basic tensor-triangular results \cite[Propositions~3.11 and~3.13]{Balmer05a}, the induced map $\Spc Q_G$ is therefore injective with image
\[
\Img( \Spc Q_G) = \{\mathcal P\mid \Thick(\Cont(G)) \subseteq \mathcal P\} = \{\mathcal P\mid \Cont(G)\in \mathcal P\} \,.
\]
Let us consider the other half of the decomposition. 
Recall the inflation functor $\Infl^G_1\colon \Cell(1)^c\to \Cell(G)^c$ (\Cref{Rem:funCell}),  which endows each
$A\in \Cell(1)^c$ with the trivial $G$-action.
It is an exact tensor functor such that $\Res^G_1\circ \Infl^G_1= \Id$. 
Since \(\Spc\) is a contravariant functor (\Cref{Rem:funSpc}), we deduce that
\[
\Spc (\mathrm{Inf}^G_1)\circ \Spc (\Res^G_1) = \Spc (\Res^G_1 \circ \mathrm{Inf}^G_1) = \Spc (\Id_{\Cell(1)^c}) = \Id.
\]
Thus $\Spc (\Res^G_1)$ is injective. 
By  \Cref{Lem:sepSpc} and the definition of support~\eqref{eq:supp}, its image is
\[
\Img (\Spc \Res^G_1) = \supp \Cont(G) =  \{\mathcal P \mid \Cont(G)\notin \mathcal P\} .
\]
This concludes the proof of the decomposition~\eqref{eq:top-decomp}.

The above decompositions of the triangular and Zariski spectra and
the commutative diagram \eqref{eq:decomp_maps} imply that the middle
vertical map~$\rho_G$ is bijective if and only if both $\rho_1$
and~$\rho_{\cat Q}$ are.  We know from
\cite[Thm.\,1.2]{DellAmbrogio10} that~$\rho_1$ is bijective.  As
for~$\rho_{\cat Q(G)}$, we will appeal to
\cite{DellAmbrogioStanley16}.

The comparison map has a graded version
\[
\rho^*_{\cat Q} \colon \Spc \cat Q(G)^c \longrightarrow \Spech \End(\unit)_*,
\]
whose target is the \emph{homogeneous} spectrum of the \emph{graded}
endomorphism ring of~$\cat Q(G)$ (see \Cref{Rem:Spech-vs-Spec});
this follows from \cite[Thm.\,5.3]{Balmer10b} for the choice
$u=\Sigma(\unit)$ of grading object.  By \Cref{Cor:Q(G)},
$\cat Q(G)^c$ is generated by its tensor unit as a thick
subcategory.  By \Cref{Lem:reg-noeth}, the graded ring
$\End_{\cat Q(G)^c}(\unit)_*$ is Noetherian and regular.  Therefore,
\cite[Thm.\,1.1]{DellAmbrogioStanley16} shows that the graded
comparison map $\rho^*_{\cat Q}$ is bijective.  The
map~$\rho_{\cat Q}$ is bijective as well because the isomorphism
$\Spech \End(\unit)_* \cong \Spec \End(\unit)$ of
\Cref{Rem:Spech-vs-Spec} identifies $\rho_G^*$ with~$\rho_Q$ (see
\cite[Cor.\,5.6.(b)]{Balmer10b}).  This completes the proof.
\end{proof}


\begin{Rem} \label{Rem:cf-SH1}
As already mentioned, the present proof of \Cref{Thm:main} is loosely inspired by the analogous determination (as a set) of the spectrum of~$\SH(G)^c$, the stable homotopy category of compact $G$-spectra; more precisely, by the proof of \cite[Thm.\,4.9]{BalmerSanders17}.
The latter argument works for any finite group $G$ by induction on its order, and this induction \emph{could} be adapted to yield a homeomorphism $\rho\colon \Spc \Cell(G)^c\overset{\sim}{\to} \Spec \Rep(G)$ for general~$G$, \emph{provided} we knew that a certain ring is regular for all~$G$ (in order to invoke~\cite{DellAmbrogioStanley16}).
The ring in question is the graded endomorphism ring of the tensor unit in the tensor triangulated category
\[
\Cell(G)/ \Loc(\{\Cont(G/H)\mid H\lneq G\}),
\]
which we currently do not know how to compute.  In other words, we would need to find a general replacement for our use of K\"ohler's UCT in \Cref{sec:Q(G)}.
\end{Rem}

\begin{Rem} \label{Rem:cf-SH2}
Besides the proofs' analogies, the result in \cite{BalmerSanders17} is actually quite different from ours.  Most strikingly, the comparison map $\rho_{\SH(G)^c}$ is very far from being injective.  In particular, \cite{DellAmbrogioStanley16} cannot be applied to the case of $G$-spectra; in this case, the role of \cite{DellAmbrogioStanley16} in the proof's structure is played instead by the fact that the composite of inflation followed by localization
\[
\xymatrix{
\SH \ar[r]^-{\Infl_1^G} & \SH(G) \ar[r] & \SH(G)/\Loc(\{\Sigma^\infty_+ G/H \mid H\lneq G\})
}
\]
is an equivalence of tensor triangulated categories (see \cite[\S2\,(H)]{BalmerSanders17}).
The analogous result for KK-theory is false.  By \Cref{Prop:Kth-comput}, it fails for $G\cong \mathbb Z/p\mathbb Z$.

\end{Rem}

\bibliographystyle{alpha}

\vspace{-.19cm} 

\printindex
\end{document}